\def\@cite#1#2{{\m@th\upshape\bfseries%
[{#1\if@tempswa{\m@th\upshape\mdseries, #2}\fi}]}}
\theoremstyle{plain}
\newtheorem{thm}{Theorem}[section]% subsection
\newtheorem{cor}[thm]{Corollary}
\newtheorem{lem}[thm]{Lemma}
\theoremstyle{definition}
\theoremstyle{remark}
\newtheorem{rem}[thm]{Remark}
\numberwithin{equation}{subsection}
\renewcommand{\bold}[1]{\medskip \noindent {\bf #1 }\nopagebreak}
\newcommand{\nc}{\newcommand}
\newcommand{\rnc}{\renewcommand}
\nc\bA{\mathbb{A}}
\nc\bB{\mathbb{B}}
\nc\bC{\mathbb{C}}
\nc\bD{\mathbb{D}}
\nc\bE{\mathbb{E}}
\nc\bF{\mathbb{F}}
\nc\bG{\mathbb{G}}
\nc\bH{\mathbb{H}}
\nc\bI{\mathbb{I}}
\nc{\bJ}{\mathbb{J}} 
\nc\bK{\mathbb{K}}
\nc\bL{\mathbb{L}}
\nc\bM{\mathbb{M}}
\nc\bN{\mathbb{N}}
\nc\bO{\mathbb{O}}
\nc\bP{\mathbb{P}}
\nc\bQ{\mathbb{Q}}
\nc\bR{\mathbb{R}}
\nc\bS{\mathbb{S}}
\nc\bT{\mathbb{T}}
\nc\bU{\mathbb{U}}
\nc\bV{\mathbb{V}}
\nc\bW{\mathbb{W}}
\nc\bY{\mathbb{Y}}
\nc\bX{\mathbb{X}}
\nc\bZ{\mathbb{Z}}
\nc\cA{\mathcal{A}}
\nc\cB{\mathcal{B}}
\nc\cC{\mathcal{C}}
\rnc\cD{\mathcal{D}}
\nc\cE{\mathcal{E}}
\nc\cF{\mathcal{F}}
\nc\cG{\mathcal{G}}
\rnc\cH{\mathcal{H}}
\nc\cI{\mathcal{I}}
\nc{\cJ}{\mathcal{J}} 
\nc\cK{\mathcal{K}}
\rnc\cL{\mathcal{L}}
\nc\cM{\mathcal{M}}
\nc\cN{\mathcal{N}}
\nc\cO{\mathcal{O}}
\nc\cP{\mathcal{P}}
\nc\cQ{\mathcal{Q}}
\rnc\cR{\mathcal{R}}
\nc\cS{\mathcal{S}}
\nc\cT{\mathcal{T}}
\nc\cU{\mathcal{U}}
\nc\cV{\mathcal{V}}
\nc\cW{\mathcal{W}}
\nc\cY{\mathcal{Y}}
\nc\cX{\mathcal{X}}
\nc\cZ{\mathcal{Z}}
\nc{\dmo}{\DeclareMathOperator}
\rnc{\Re}{\operatorname{Re}}
\rnc{\Im}{\operatorname{Im}}
\rnc{\span}{\operatorname{span}}
\dmo{\rank}{rank}
\dmo{\End}{End}
\dmo{\Hom}{Hom}
\dmo{\Jac}{Jac}
\dmo{\Id}{Id}
\dmo{\Eu}{Eu}
\dmo{\Aut}{Aut}
\dmo{\CP}{\bC P^1}
\dmo{\Hur}{{Hur}}
\title{Totally geodesic submanifolds of Teichm\"uller space}
\author{Alex~Wright}
\thanks{Note: This is a post-publication version which corrects an error pointed out by Frederik Benirschke and makes a few other clarifications. The error was in a claim that $\Omega L$ is complex analytic. It is actually only clear that it is real analytic, but this issue can be avoided by making technical changes to the argument. No change is required to any of the main results.  An erratum is also available \cite{Err}. }
\begin{document}
\maketitle
% removes page number from first page
\thispagestyle{empty}

%\begin{abstract}
%We show that any totally geodesic submanifold of Teichm\"uller space of dimension greater than one covers a totally geodesic subvariety, and only finitely many totally geodesic subvarieties of dimension greater than one exist in each moduli space.
%\end{abstract}

%%%%%%%%%%%%%%%%%%% 
% TABLE OF CONTENTS
%%%%%%%%%%%%%%%%%%%
% allows subsections (depth 1) to be displayed in table of contents
%\setcounter{tocdepth}{1} 
%\tableofcontents
%\vfill
%\newpage

%%%%%%%%%%%%%%%%%%%%%%%%%%%%%%%%%%%%%%%%%%
%%%%%%%%%%%%%%%%%%%%%%%%%%%%%%%%%%%%%%%%%%
\section{Introduction}
%%%%%%%%%%%%%%%%%%%%%%%%%%%%%%%%%%%%%%%%%%
%%%%%%%%%%%%%%%%%%%%%%%%%%%%%%%%%%%%%%%%%%

\bold{Main results.} Let $\cT_{g,n}$ and $\cM_{g,n}$ denote the Teichm\"uller and moduli space respectively of genus $g$ Riemann surfaces with $n$ marked points. The Teichm\"uller metric on these spaces is a natural Finsler metric that quantifies the failure of two different Riemann surfaces to be conformally equivalent. It is equal to the Kobayashi metric \cite{Roy}, and hence reflects the intrinsic complex geometry of these spaces. 

There is a unique holomorphic and isometric embedding from the hyperbolic plane to $\cT_{g,n}$ whose image passes through any two given  points. The images of such maps,  called Teichm\"uller disks or complex geodesics, are much studied in relation to the geometry and dynamics of Riemann surfaces and their moduli spaces.

A complex submanifold of $\cT_{g,n}$ is called totally geodesic if it contains a complex geodesic through any two of its points, and a  subvariety of $\cM_g$ is called totally geodesic if a  component of its preimage in $\cT_{g,n}$ is totally geodesic. Totally geodesic submanifolds of dimension 1 are exactly the complex geodesics. 

Almost every complex geodesic in $\cT_{g,n}$ has dense image in $\cM_{g,n}$ \cite{Ma2, V2}. We show that higher dimensional totally geodesic submanifolds are much more rigid. 

\begin{thm}\label{T:main}
The image in $\cM_{g,n}$ of a totally geodesic complex submanifold of $\cT_{g,n}$ of dimension greater than 1 is a closed totally geodesic subvariety of $\cM_{g,n}$. 
\end{thm}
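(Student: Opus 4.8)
The plan is to convert the problem into one about $\mathrm{SL}(2,\bR)$--invariant sets in strata of differentials and then apply the structure theory for such sets: the Eskin--Mirzakhani--Mohammadi theorem that $\mathrm{GL}^+(2,\bR)$--orbit closures are affine invariant submanifolds (properly immersed, $\bR$--linear in period coordinates) and Filip's theorem that affine invariant submanifolds are quasiprojective varieties.

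First I would lift $N$ to the total space $Q\cT_{g,n}$ of the bundle of integrable quadratic differentials with at most simple poles at the marked points, recalling that complex geodesics are precisely the projections of $\mathrm{SL}(2,\bR)$--orbits. Let $\Omega N\subseteq Q\cT_{g,n}$ consist of the $(X,q)$ with $X\in N$ such that the complex geodesic generated by $q$ lies in $N$. Since every complex geodesic in $N$ through $X$ is tangent to $T_X N$, and since $N$ is swept out near $X$ by such geodesics, $\Omega N$ surjects onto $N$ with fibers of full real dimension; and $\Omega N$ is $\mathrm{SL}(2,\bR)$--invariant, because $\mathrm{SL}(2,\bR)$ moves a point within the complex geodesic it generates while keeping the basepoint on that geodesic and hence in $N$. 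Pushing $\Omega N$ down to $Q\cM_{g,n}$, separating into the loci with prescribed orders of zeros and poles, and passing through the orienting double cover to strata of abelian differentials, one obtains $\mathrm{SL}(2,\bR)$--invariant subsets of strata.

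The main work is to pin down the orbit closures inside these sets. Fix $(X,q)\in\Omega N$ with $\mathrm{GL}^+(2,\bR)$--orbit as large as possible, and let $\cM^\ast$ be its closure; by Eskin--Mirzakhani--Mohammadi, $\cM^\ast$ is an affine invariant submanifold. I would then argue that $\cM^\ast$ projects locally onto $N$. One inclusion is soft: $\cM^\ast$ lies in the closure of $\Omega N$, which maps into $\overline N$. For the other I expect to use total geodesy to see that the tangent space of $\cM^\ast$, in period coordinates, is spanned over $\bR$ by the $\mathrm{SL}(2,\bR)$--translates of the ``cotangent directions of $N$'' and therefore has complex dimension exactly $2\dim_\bC N$, so that the projection has rank $\dim_\bC N$ and fills out a neighborhood in $N$. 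The hypothesis $\dim_\bC N\ge 2$ is essential precisely here: it forces $\cM^\ast$ to have rank at least two, and one needs the rigidity available in rank at least two (for instance Wright's cylinder deformation theorem and its consequences) to prevent the orbit closure from being larger than expected --- in rank one no such control exists, matching the fact that a single complex geodesic can be dense in $\cM_{g,n}$.

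Granting this, the conclusion follows: an affine invariant submanifold is closed in its stratum, and by Filip it is a quasiprojective variety; restricting to unit norm, its projection to $\cM_{g,n}$ is proper with algebraic image, so the image of $\cM^\ast$ --- which equals the image of $N$ --- is a closed subvariety. As $N$ is then a component of the preimage of its image under $\cT_{g,n}\to\cM_{g,n}$, this image is a closed totally geodesic subvariety. I expect the rigidity step to be the main obstacle: because $N$ is only a submanifold (not assumed closed), $\Omega N$ need not be closed, so one must pass to closures and still show, using total geodesy and $\dim_\bC N\ge 2$, both that the resulting affine invariant submanifold projects onto all of $N$ and that its dimension does not grow. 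A secondary, purely technical, difficulty is the bookkeeping involved in passing between quadratic differentials on $X$ and abelian differentials on covers while tracking the marked points.
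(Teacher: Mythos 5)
Your reduction to affine invariant submanifolds (EMM for orbit closures, Filip for algebraicity, passing to the orienting double cover) matches the general framework of the paper, and you have correctly located the heart of the matter. But that heart is exactly what is missing: you assert that total geodesy plus ``rigidity available in rank at least two'' shows the orbit closure $\cM^\ast$ (equivalently, the closure of $QN$) has dimension exactly $2\dim_\bC N$ and projects onto $N$, and you offer no argument for it. This is not a step one can wave at with the cylinder deformation theorem or with rank-$\ge 2$ finiteness: those results rule out infinitely many large orbit closures (that is the content of the finiteness theorem for totally geodesic submanifolds), but they do not by themselves prevent $QN$ from being a proper, non-closed subset of a strictly larger affine invariant submanifold --- which is precisely the scenario that must be excluded to prove closedness. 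Note also that your appeal to ``rank at least two'' already presupposes that the rank of $\Omega N$ equals $\dim_\bC N$, which is itself a theorem in the paper (transversality to the isoperiodic foliation) requiring a Jenkins--Strebel argument: one uses the cylinder deformation theorem to produce, inside $\Omega N$, a $(2d-r)$-dimensional family of horizontally periodic differentials with fixed core curves and circumferences, and the uniqueness of Jenkins--Strebel differentials to conclude these inject into moduli space, forcing $r=d$.

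The paper's actual proof of the closedness step uses machinery absent from your sketch. Assuming $Q\overline{N}\neq QN$, so $\dim Q\overline{N}>2d$, one first shows (via properness of the Teichm\"uller exponential map and invariance of domain) that through \emph{every} point of $\overline{N}$ there passes a $d$-dimensional totally geodesic submanifold whose lift is locally linear in period coordinates. This populates a Grassmannian-type bundle $R$ over $\Omega\overline{N}$ of $2d$-planes containing $\omega$ with a nonempty, closed, $GL(2,\bR)$-equivariant subset $R'$ of planes on which the projection to $\cM_{g,n}$ has rank at most $d$, with fibers cut out by analytically varying polynomials. The Eskin--Filip--Wright computation of the algebraic hull of the Kontsevich--Zorich cocycle then forces each fiber of $R'$ to contain every $2d$-plane that meets $\ker(p)$ maximally and is as symplectically degenerate as possible; on the other hand, the Jenkins--Strebel/cylinder-deformation construction produces such a plane on which the projection is injective, hence of rank at least $2d-1>d$ (using $\dim Q\overline{N}>2d$ and $d>1$), a contradiction. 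Without the limiting construction of totally geodesic submanifolds through points of $\overline{N}$, the Grassmannian bundle, and the algebraic hull theorem, your proposal does not close the gap it correctly identifies as the main obstacle. (A smaller issue: closedness of an affine invariant submanifold is closedness in its stratum, so your final properness claim for the projection of the unit-norm locus also needs care; the paper instead deduces closedness of $N$ directly from $Q\overline{N}=QN$.)
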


One dimensional totally geodesic subvarieties of  $\cM_{g,n}$ are called Teichm\"uller curves. There are infinitely many Teichm\"uller curves in each $\cM_{g,n}$. We show that higher dimensional totally geodesic submanifolds are much more rare. 

\begin{thm}\label{T:finite}
There are only finitely many totally geodesic submanifolds of  $\cM_{g,n}$ of dimension greater than 1.
\end{thm}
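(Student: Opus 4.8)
The plan is to translate the statement into a finiteness statement about affine invariant submanifolds of strata of abelian differentials, and then to run an induction on complexity using their boundary theory. By Theorem~\ref{T:main} it suffices to bound, for each fixed $(g,n)$, the number of closed totally geodesic subvarieties $N\subset\cM_{g,n}$ with $\dim_{\bC}N\ge 2$. To such an $N$ I would attach the set $\cN$ of unit-area quadratic differentials $(X,q)$ with $X\in N$ whose complex geodesic lies in $N$; since $N$ is totally geodesic and, by Theorem~\ref{T:main}, closed, $\cN$ is a closed $SL(2,\bR)$-invariant subset of a stratum of quadratic differentials over $\cM_{g,n}$. Passing to orientation double covers I may assume $\cN$ lives in a stratum $\cH$ of abelian differentials, whence by Eskin--Mirzakhani--Mohammadi each $SL(2,\bR)$-orbit closure in $\cN$ is an affine invariant submanifold. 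The hypothesis $\dim_{\bC}N\ge2$ forces rank at least $2$, and the totally geodesic condition translates into the tangent space being, in period coordinates, the complexification of a subspace of the \emph{absolute} cohomology defined over $\bR$; in particular the relative period (``rel'') directions vanish. Call such an affine invariant submanifold \emph{geodesic}. Since $N\mapsto\cN$ is finite-to-one with transparent fibres, and there are finitely many strata for fixed $(g,n)$, it is enough to show that each stratum of abelian differentials contains only finitely many geodesic affine invariant submanifolds.

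I would prove this by induction on the complexity $3g-3+n$. If a geodesic $\cN$ does not come from a lower-complexity one, degenerate it: by the boundary theory of Mirzakhani--Wright, pinching suitable short curves on surfaces of $\cN$ yields affine invariant submanifolds in boundary strata of strictly smaller complexity, and one checks that the geodesic property is inherited --- vanishing of rel persists, while the rank remains $\ge 2$ unless it drops to $1$, in which case the boundary component is a closed $SL(2,\bR)$ orbit carrying no information. One further checks that $\cN$ is determined, up to finitely many possibilities, by its collection of boundary pieces. Granting this, the inductive hypothesis reduces the problem to bounding the number of \emph{primitive} geodesic affine invariant submanifolds in a fixed stratum --- those not obtained by degeneration or by covering constructions from geodesic ones of smaller complexity.

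The main obstacle is this last step, the rigidity of primitive geodesic affine invariant submanifolds. When the rank is maximal, i.e. $\cN$ is full rank, the classification of full-rank affine invariant submanifolds (Mirzakhani--Wright) identifies $\cN$ with a connected component of a stratum or a hyperelliptic locus; there are only finitely many of these and they are all accounted for. The remaining low-rank case is where essentially all the work sits: here I expect to combine the cylinder deformation theorem with the vanishing of rel to prove that a primitive geodesic $\cN$ is extremely rigid --- surfaces in $\cN$ must have strong periodicity properties, and, rel being trivial, their cylinders are forced into configurations whose combinatorics is pinned down by the rank and the stratum, which in turn determines $\cN$ among affine invariant submanifolds up to finitely many choices. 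Turning this expected rigidity into a proof --- producing from a primitive geodesic $\cN$ that is neither full rank nor a degeneration either an outright contradiction or an explicit finite list --- is the crux; with it in hand the induction closes and Theorem~\ref{T:finite} follows.
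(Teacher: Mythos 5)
There is a genuine gap, in fact two. First, you assert that $\dim_{\bC}N\ge 2$ ``forces rank at least $2$'' and that the totally geodesic condition ``translates into'' the tangent space of $\Omega N$ being the complexification of an absolute subspace, so that the rel directions vanish. Neither of these is automatic: a priori $\Omega N$ could have rank $1$ and many rel directions (dimension $2d$ with $d\ge2$ coming almost entirely from $\ker(p)$), and ruling this out is precisely the content of Theorem \ref{T:trans}. The paper proves it by producing, via the cylinder deformation theorem (Theorem \ref{T:JS}, from \cite{Wcyl}), a $(2d-r)$-dimensional family of Jenkins--Strebel differentials with constant circumferences, and then using the uniqueness of Jenkins--Strebel differentials (Lemma \ref{L:unique}) to bound $\dim N\ge 2d-r$, forcing $r=d$. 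You invoke the conclusion (indeed a stronger ``no rel at all'' statement) without any argument.

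Second, and more seriously, you do not use the finiteness theorem of Eskin--Filip--Wright \cite{EFW} --- that each stratum of Abelian differentials contains only finitely many affine invariant submanifolds of rank at least $2$ --- which is exactly the black box the paper combines with Theorem \ref{T:trans} to conclude (together with the observations that $\Omega N$ determines $N$ and that only finitely many strata can occur for fixed $(g,n)$). Instead you propose to reprove a finiteness statement for your class of ``geodesic'' affine invariant submanifolds by induction on complexity via boundary degeneration, full-rank classification, and a rigidity statement for primitive low-rank examples; you yourself flag that this last rigidity step is only ``expected.'' That step is essentially the hard content of the finiteness problem (it is what \cite{EFW} accomplishes by computing algebraic hulls, not by cylinder combinatorics alone), and several of the intermediate claims --- that the geodesic property and rank survive degeneration in a useful form, that rank-$1$ boundary pieces are closed orbits ``carrying no information,'' and that $\cN$ is determined up to finitely many choices by its boundary pieces --- are unproven and in places false as stated. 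As written the induction does not close, so the proposal does not yield Theorem \ref{T:finite}; the fix is simply to quote \cite{EFW} for rank $\ge 2$ finiteness and to supply the rank computation via Theorem \ref{T:trans}.
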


\bold{Context.} One source of totally geodesic submanifolds of $\cM_{g,n}$ is covering constructions, see \cite[Section 6]{MMW} for a definition. The first example of a totally geodesic submanifold of dimension greater than 1 not coming from a covering construction was given in \cite{MMW}, and two additional examples  appear in \cite{EMMW}. These three examples are totally geodesic surfaces in $\cM_{1,3}, \cM_{1,4}$ and $\cM_{2,1}$ respectively. 

Work of Filip implies that any closed totally geodesic submanifold of $\cM_{g,n}$ is in fact a subvariety \cite{Fi2}. Any real submanifold of $\cT_{g,n}$ that contains the Teichm\"uller disk between any pair of its points must in fact be a complex submanifold.

The inclusion of a totally geodesic complex submanifold into Teichm\"uller space must be an isometry for the Kobayashi metrics. Antonakoudis has shown that there is no holomorphic isometric immersion of a bounded symmetric domain of dimension greater than 1 into Teichm\"uller space \cite{St1}, and that any isometry of a complex disk into Teichm\"uller space is either holomorphic or antiholomorphic \cite{St2}. 

\bold{Elements of the proofs.} If $N$ is a subset of moduli or Teichm\"uller space, define $QN$ to be the locus of quadratic differentials which generate Teichm\"uller disks contained in $N$.  Typically $N$ will be a totally geodesic subvariety or submanifold, in which case we may view $QN$ as the cotangent bundle to $N$. Note that $QN$ is stratified according to the number of zeros and poles of the quadratic differential.

For every quadratic differential on a Riemann surface, either the quadratic differential is the square of an Abelian differential, or  there is a unique  double cover on which the lift of the quadratic differential is the square of an Abelian differential. The double cover is equipped with an involution. We call the Abelian differential together with this choice of involution the square root of the quadratic differential.  

Let $\Omega N$ be the locus of square roots of quadratic differentials in the largest dimensional stratum of $QN$. %Note $\Omega N\subset \Omega \cM_{g'}$, where $g'$ is determined by the orders of zeros and poles of quadratic differentials in the largest dimensional stratum of $QN$. 
The following ingredient in our analysis may be of independent interest.  

\begin{thm}\label{T:trans}
If $N$ is a totally geodesic subvariety of moduli space, then  $\Omega N$ is transverse to the isoperiodic foliation.% of $\Omega \cM_{g'}$. 
\end{thm}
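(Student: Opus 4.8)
\emph{Proof strategy.} The plan is to reinterpret the conclusion cohomologically and then reduce it to a statement about the zeros of square roots. Fix $(\hat X,\alpha,\sigma)\in\Omega N$ lying over $(X,q)\in QN$, and let $\hat Z$ be the zero set of $\alpha$. Period coordinates identify the tangent space of the ambient stratum with the $\sigma$-anti-invariant part $H^1_-(\hat X,\hat Z;\bC)$ of relative cohomology, and the isoperiodic foliation is the kernel foliation of the natural map $p\colon H^1_-(\hat X,\hat Z;\bC)\to H^1_-(\hat X;\bC)$ to absolute cohomology. So asserting that $\Omega N$ is transverse to the isoperiodic foliation is exactly asserting that $p$ restricts to an injection on $T_{\bC}\Omega N$, i.e.\ that $\Omega N$ admits no nonzero relative (isoperiodic) deformation along which one stays in $N$. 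First I would record the structural facts. Since $N$ is totally geodesic, the $GL^{+}(2,\bR)$-orbit of any square root of a $q\in QN$ maps to the Teichm\"uller disk of $q$, which lies in $N$, and the $GL^{+}(2,\bR)$-action preserves the stratification; hence $QN$ and $\Omega N$ are $GL^{+}(2,\bR)$-invariant, so by the work of Eskin--Mirzakhani--Mohammadi and Filip they are affine invariant subvarieties and $T_{\bC}\Omega N$ is a flat, $GL^{+}(2,\bR)$-invariant, $\sigma$-anti-invariant subbundle. Moreover $\dim_{\bC}\Omega N=2\dim_{\bC}N$ because $QN$ is the cotangent bundle of $N$.

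Next I would pin down the anti-invariant isoperiodic directions. The kernel of $p$ is spanned by cohomology classes supported near $\hat Z$, namely the image of $H^{0}(\hat Z;\bC)/H^{0}(\hat X;\bC)$ under the connecting map; and $\sigma$ acts on $\hat Z$ with the preimage of each odd-order zero of $q$ as a $\sigma$-fixed point and the two preimages of each even-order zero as a transposed pair (the simple poles at the marked points are branch points of the double cover but are not zeros of $\alpha$). Since $\sigma$ acts trivially on $H^{0}(\hat X;\bC)$, the $\sigma$-anti-invariant part of $\ker p$ therefore has dimension equal to the number of even-order zeros of the generic $q$ in $QN$, i.e.\ of $q$ in the largest stratum. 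In particular, if that stratum consists of quadratic differentials with only odd-order zeros---for instance simple zeros together with the $n$ marked simple poles---then the anti-invariant isoperiodic space is zero, and since $T_{\bC}\Omega N$ is $\sigma$-anti-invariant, transversality is immediate. So the real work is the case in which the generic $q$ in $QN$ has an even-order zero.

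In that case, suppose for contradiction that $0\neq v\in T_{\bC}\Omega N$ with $p(v)=0$. Because $T_{\bC}\Omega N$ and $\ker p$ are both flat and $GL^{+}(2,\bR)$-invariant, $v$ generates a positive-dimensional $GL^{+}(2,\bR)$-invariant family of isoperiodic deformations inside $\Omega N$; concretely, $v$ slides a $\sigma$-transposed pair of zeros of $\alpha$ in opposite ways, which downstairs is a genuine relative deformation of $(X,q)$---moving an even-order zero of $q$ while leaving the flat structure unchanged elsewhere---whose nearby deformed differentials must again generate Teichm\"uller disks contained in $N$. The plan is to derive a contradiction by playing the isoperiodic flow along $v$ off against the algebraicity of $QN$: $QN$ is quasi-projective, being the cotangent bundle of the subvariety $N$, and $\Omega N$ is algebraic over it, whereas isoperiodic leaves are generically not algebraic and have large closures. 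Combining the flow along $v$ with the cylinder deformations available inside the affine invariant $\Omega N$ (via the Cylinder Deformation Theorem), together with Filip's algebraicity \cite{Fi2} and the rank versus rel-dimension dichotomy for affine invariant submanifolds, one wants to force the closure of an isoperiodic leaf through $(\hat X,\alpha,\sigma)$ to have dimension exceeding $2\dim_{\bC}N$, contradicting its containment in $\Omega N$. I expect this last step---ruling out a nontrivial relative deformation of $\Omega N$ using that $N$ is a totally geodesic subvariety, rather than merely an affine invariant one---to be the principal obstacle, and to require a careful case analysis of the possible strata of $QN$.
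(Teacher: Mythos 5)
Your reduction is correct as far as it goes: transversality is exactly the statement that the projection $p$ to absolute cohomology is injective on the tangent space of $\Omega N$, i.e.\ that $\Omega N$ has no rel directions, and you correctly record that $\dim \Omega N = 2d$ where $d=\dim N$ and that $\Omega N$ is an affine invariant submanifold. But the essential step is missing, and you say so yourself: in the case where the generic stratum of $QN$ admits anti-invariant rel deformations, your argument reduces to the hope that a nonzero rel direction $v$ in the tangent space would force the closure of an isoperiodic leaf inside $\Omega N$ to have dimension exceeding $2d$. No mechanism is given for this, and none is likely to exist at this level of generality: plenty of affine invariant submanifolds (entire strata, for instance) contain their isoperiodic leaves and have nonzero rel, so affine invariance, algebraicity, and equality $\dim\Omega N=2d$ alone cannot yield the contradiction. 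The hypothesis that must be exploited is that the image of $\Omega N$ in $\cM_{g,n}$ has dimension only $d$, half the dimension of $\Omega N$; your sketch never uses this beyond the dimension count, and the ``rank versus rel-dimension dichotomy'' you invoke is not a theorem (the general relation is $\dim = 2\,\mathrm{rank} + \mathrm{rel}$, which by itself rules out nothing).

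The paper closes this gap by a concrete dimension comparison that your proposal does not contain. Writing $r$ for the rank of $\Omega N$, the cylinder deformation results of \cite{Wcyl} produce a point of $\Omega N$ and a $(2d-r)$-dimensional affine family through it consisting of Jenkins--Strebel (horizontally periodic) differentials whose horizontal core curves and circumferences are constant along the family. The classical uniqueness theorem for Jenkins--Strebel differentials (a fixed Riemann surface carries at most one such differential with prescribed core curves and circumferences) then forces the underlying Riemann surfaces in this family to be pairwise distinct, so $\dim N \ge 2d-r$. Combined with $r\le d$ this gives $r=d$, i.e.\ $p$ of the tangent space has dimension $2d=\dim\Omega N$, which is precisely the absence of rel directions and hence transversality. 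Note also that this argument needs no case analysis on the parity of the zeros of $q$; your observation that the odd-order-zero case is immediate is fine but is subsumed. To complete your write-up you would need to replace the speculative last paragraph with an argument of this kind (or some other genuine use of the fact that the forgetful map $\Omega N\to\cM_{g,n}$ has $d$-dimensional image), since that is where the theorem actually lives.
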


 Theorem  \ref{T:trans} is equivalent to saying that there is no nonconstant path in $\Omega N$ along which absolute periods of the Abelian differentials are locally constant. See \cite{McM:iso} for a definition of the isoperiodic foliation, which is also known as the kernel foliation, the absolute period foliation, and the rel foliation.

The proof of Theorem \ref{T:trans} uses results on cylinder deformations from \cite{Wcyl} and a classical result on Jenkins-Strebel differentials. %The results needed from \cite{Wcyl}  build upon a number of results in the theory of Teichm\"uller dynamics including \cite{MinW, SW2, EM, EMM, AEM}. 
Theorem \ref{T:finite} follows from Theorem \ref{T:trans} and recent finiteness results of Eskin-Filip-Wright \cite{EFW}. 

The proof of Theorem \ref{T:main} also uses Theorem \ref{T:trans} and results of \cite{EFW}. A key tool is the computation of the algebraic hull of the Kontsevich-Zorich cocycle from  \cite{EFW}. 

%This paper is written so that the required result from \cite{Wcyl} and \cite{EFW} may be viewed as black boxes.  

%\bold{Open problems.} It would be  interesting to have a classification of totally geodesic submanifolds of dimension greater than 1 in $\cM_{1,3}, \cM_{1,4}$ and $\cM_{2,1}$, since these are the moduli spaces in which examples were recently found. It would also be interesting to determine if there exists totally geodesic submanifolds of  $\cM_{g,n}$ of dimension greater than 2 that do not arise from a covering construction. It would be interesting to study the fundamental groups of the three recently discovered totally geodesic surfaces; these are subgroups of mapping class groups. 

\bold{Acknowledgements.} This paper was inspired by comments of Curt McMullen, who in particular suggested the possibility that Theorem \ref{T:main} might be true. The author thanks Ben Dozier, Alex Eskin, Simion Filip, Steve Kerckhoff, Vlad Markovic, Curt McMullen,  Ronen Mukamel, and Mike Wolf for helpful conversations. 
 
This research was carried out in part at the ICERM conference ``Cycles on Moduli Spaces, Geometric Invariant Theory, and Dynamics", and was conducted during the period while the author served as a Clay Research Fellow. 

%%%%%%%%%%%%%%%%%%%%%%%%%%%%%%%%%%%%%%%%%%
%%%%%%%%%%%%%%%%%%%%%%%%%%%%%%%%%%%%%%%%%%
\section{Proof of Theorems \ref{T:finite} and \ref{T:trans}} 
%%%%%%%%%%%%%%%%%%%%%%%%%%%%%%%%%%%%%%%%%%
%%%%%%%%%%%%%%%%%%%%%%%%%%%%%%%%%%%%%%%%%%

We use notation consistent with \cite{MMW}. We assume some familiarity with recent results on the $GL(2, \bR)$ action on the Hodge bundle.  

If $N$ is a totally geodesic subvariety of moduli space, $\Omega N$ is an example of an affine invariant submanifold;  these are subvarieties of a stratum of $\Omega \cM_{g'}$ (for some $g'>0$) that are locally equal to a finite union of subspaces defined over $\bR$ in period coordinates \cite{EMM, Fi2}. The tangent space $\Omega N(Y, \omega)$ to an affine invariant submanifold $\Omega N$ at a point $(Y,\omega)$ is a subspace of relative cohomology $H^1(Y, \Sigma, \bC)$, where $\Sigma$ is the set of zeros of $\omega$. 
%(Everything in this paper is over the field $\bC$ unless otherwise indicated.) 
Let $p$ denote the map from relative to absolute cohomology.  The rank is defined to be half the dimension of $p$ of the tangent space \cite{Wfield}. This is an integer because $p$ of the tangent space is symplectic \cite{AEM}. 

To prove Theorem \ref{T:trans} we will compare the dimension of $\Omega N$ to that of $N$, using the following two results to get a lower bound on the dimension of $N$.  

An affine subspace is any translation of a vector subspace.   A Jenkins-Strebel differential is an Abelian or quadratic differential that is the union of horizontal cylinders and their boundaries; these are also known as horizontally periodic  differentials. %The length of a vertical path across a horizontal cylinder will be called the height of that cylinder. %The subset given by the proposition can be parameterized by heights of horizontal cylinders together with twist parameters. 
Unless specified other, all references to (co)dimensions will be over $\bC$.

\begin{thm}\label{T:JS}
Any affine invariant submanifold $\Omega N$ of rank $r$ contains a set of Jenkins-Strebel differentials whose image in local period coordinates is a open subset of an  affine subspace $S$ of  codimension $r$, such that circumferences of horizontal cylinders are constant on this subset. 
 
 The  affine subspace $S$ is the translate of a subspace $L$ such that $p(L)$ is a Lagrangian in $p(\Omega N(Y, \omega))$ and such that $\ker(p)\cap \Omega N(Y, \omega) \subset L$. This $L$ is the subspace of $\Omega N(Y, \omega)$ which vanishes on the core curves of all horizontal cylinders. 
 \end{thm}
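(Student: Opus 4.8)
The plan is to produce the Jenkins-Strebel differentials explicitly by starting from an arbitrary point $(Y,\omega) \in \Omega N$ and deforming it within $\Omega N$ so as to make the horizontal foliation periodic while keeping cylinder circumferences fixed. First I would recall the structure theory of the $GL(2,\bR)$ orbit closure: at $(Y,\omega)$ the tangent space $\Omega N(Y,\omega) \subset H^1(Y,\Sigma,\bC)$ is defined over $\bR$, its image $p(\Omega N(Y,\omega))$ under the map to absolute cohomology is symplectic of dimension $2r$, and $\ker(p) \cap \Omega N(Y,\omega)$ is the ``rel'' or isoperiodic subspace tangent to $\Omega N$. Choose a real subspace $L \subset \Omega N(Y,\omega)$ containing $\ker(p)\cap \Omega N(Y,\omega)$ and with $p(L)$ a Lagrangian subspace of the symplectic space $p(\Omega N(Y,\omega))$; such an $L$ exists because any isotropic subspace of a symplectic vector space extends to a Lagrangian, and one may take the preimage under $p|_{\Omega N(Y,\omega)}$ of such a Lagrangian. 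Then $L$ has codimension $r$ in $\Omega N(Y,\omega)$, and the affine subspace through $(Y,\omega)$ in period coordinates in the direction $L$ is the candidate set; it remains to show a nonempty open subset of it consists of Jenkins-Strebel differentials with constant cylinder circumferences.

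The heart of the argument is the classical result on Jenkins-Strebel differentials: a holomorphic quadratic (here, squared Abelian) differential is Jenkins-Strebel with prescribed cylinder circumferences exactly when certain period conditions hold, and the Jenkins-Strebel condition is an open condition on the space of differentials with fixed such data. Concretely, I would invoke the theorem (Jenkins, Strebel; see also Hubbard-Masur) that for any Riemann surface and any choice of a system of free homotopy classes of simple closed curves together with positive ``heights'', there is a unique Jenkins-Strebel differential realizing them, and that the set of Jenkins-Strebel differentials whose horizontal cylinders are in a fixed combinatorial configuration is open in the moduli of differentials. Cutting along the core curves of the horizontal cylinders of such a differential, the complex structure on each cylinder is determined by its modulus, and deforming within $L$ changes the relative periods but, because $p(L)$ is Lagrangian with respect to the intersection pairing, one can arrange that all the \emph{circumference} periods (the integrals of $\omega$ over the core curves, which pair trivially among themselves precisely because the core curves are disjoint and hence have zero algebraic intersection, so they span an isotropic and in fact Lagrangian subspace after identifying with $p(L)$) are held constant. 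The constraint ``$\ker(p)\cap\Omega N(Y,\omega)\subset L$'' guarantees that the full rel deformation within $\Omega N$ preserves these absolute periods, so moving in $L$ genuinely fixes the circumferences.

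The main obstacle, and the step requiring the most care, is matching the \emph{algebro-geometric} input with the \emph{flat-geometric} conclusion: one must show that the Lagrangian $p(L)$ can be chosen to be spanned by classes dual to the core curves of a single Jenkins-Strebel differential lying in $\Omega N$, i.e.\ that the combinatorial/topological data of the cylinder decomposition is compatible with membership in $\Omega N$. Here I would use that $\Omega N$ is $GL(2,\bR)$-invariant and that the horizontal foliation of a generic differential in $\Omega N$ can be approximated (by applying elements of $SL(2,\bR)$, in particular the horocycle flow, or by a small perturbation within the stratum followed by a limiting argument) to one whose saddle connections in the horizontal direction close up into cylinders; periodicity of the horizontal direction is a dense condition. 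Then one checks that the span of the core curves of the resulting cylinder decomposition, viewed in absolute cohomology and intersected with $p(\Omega N(Y,\omega))$, is Lagrangian — this uses the cylinder-deformation results of \cite{Wcyl}, which tell us that the cylinders of a horizontally periodic differential in an affine invariant submanifold organize into equivalence classes whose cohomological spans behave well, and a dimension count comparing the number of independent cylinder classes to $r$.

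Finally, I would verify the codimension and the openness assertions: the affine subspace has codimension $r$ in $\Omega N$ by construction ($L$ has codimension $r$), and the Jenkins-Strebel locus with fixed combinatorics being open in period coordinates is exactly the classical openness statement, intersected with the affine subspace $(Y,\omega)+L$. The constancy of circumferences on this open set is then immediate since moving in $L$ fixes the core-curve periods by the Lagrangian condition. I expect the Lagrangian-compatibility step to occupy most of the proof, with the rest being a careful bookkeeping of relative versus absolute periods.
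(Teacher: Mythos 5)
Your plan has the logical order reversed relative to what can actually be proved, and this creates two genuine gaps. First, you choose the Lagrangian $L$ abstractly (any Lagrangian in $p(\Omega N(Y,\omega))$ pulled back through $p$) and then try to find Jenkins-Strebel differentials adapted to it. There is no reason an arbitrary such Lagrangian is dual to the core curves of any horizontally periodic surface in $\Omega N$. The substantive input here is \cite[Theorem 1.10]{Wcyl}: $\Omega N$ contains a horizontally periodic surface whose core curves of horizontal cylinders span an $r$-dimensional subspace of the dual of $\Omega N(Y,\omega)$. This does \emph{not} follow from density of horizontally periodic surfaces plus a dimension count, as you suggest; a horizontally periodic surface in $\Omega N$ can easily have core curves spanning fewer than $r$ dimensions, and the proof in \cite{Wcyl} is an inductive argument (using the cylinder deformation theorem and the horocycle flow) to increase the span. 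The correct order is: take that specially chosen surface, define $L$ as the annihilator of its core curves, verify $p(L)$ is Lagrangian, and then deduce $\ker(p)\cap\Omega N(Y,\omega)\subset L$ from the codimension count, rather than imposing it at the outset.

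Second, and more seriously, your assertion that ``the set of Jenkins-Strebel differentials whose horizontal cylinders are in a fixed combinatorial configuration is open in the moduli of differentials'' is false. Horizontal periodicity is a positive-codimension condition (all boundary saddle connections must have real period); for instance a rel deformation fixes every core-curve period and circumference, yet can tilt a boundary saddle connection and destroy periodicity of the horizontal foliation. Fixing circumferences only guarantees that the existing cylinders persist horizontally with the same circumference, not that the deformed surface is still a union of horizontal cylinders. The paper closes exactly this gap by appealing to the proof in \cite[Section 8]{Wcyl}: for the specially chosen $(Y,\omega)$, every sufficiently small deformation in the direction of $L$ is realized by cylinder deformations (stretches and shears) of the existing horizontal cylinders, so no new cylinders or non-periodic horizontal regions appear and the deformations remain Jenkins-Strebel. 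Without that specific property of the chosen surface, the openness step in your argument fails.
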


Theorem \ref{T:JS} can be viewed as a black box coming from \cite{Wcyl}, however we  provide  specific references to \cite{Wcyl}.

\begin{proof}
\cite[Theorem 1.10]{Wcyl} asserts the existence of a horizontally periodic $(Y,\omega)\in N$ such that the core curves of the horizontal cylinders span a subspace of the dual space of $\Omega N(Y, \omega)$ of dimension $r$. The subspace $L$ is the subspace of $\Omega N(Y, \omega)$ that annihilates all these core curves. Deforming $(Y,\omega)$ in any direction in $L$, the periods of the core curves of the horizontal cylinders remain constant. Hence all the horizontal cylinders of $(Y,\omega)$ persist on any such sufficiently small deformation, and remain horizontal and of constant circumference. 

The proof of \cite[Theorem 1.10]{Wcyl} in \cite[Section 8]{Wcyl} gives that for the $(Y,\omega)$ that is specially chosen in the proof, any sufficiently small deformation of $(Y,\omega)$ in the direction in $L$ does not create any new cylinders. Indeed,  \cite[Section 8]{Wcyl} gives that any such  deformation can be obtained by certain cylinder deformations of the horizontal cylinders of $(Y,\omega)$. Thus these deformations remain Jenkins-Strebel. 

The proof of \cite[Theorem 1.10]{Wcyl} gives that $p(L)$ is Lagrangian. Since $L$ has codimension $r$ and $p(L)$ has dimension $r$, it follows that $\ker(p)\cap \Omega N(Y, \omega) \subset L$. 
\end{proof}

Problems on the existence and uniqueness of Jenkins-Strebel differentials have been extensively studied, see for example \cite{Gardiner, HM, Jenkins, Liu, Strebel, Wolf}. Here we require only the following  uniqueness result. See for example Theorem 20.3 and the remarks after Lemma 20.3 in \cite{Strebel} for an expository account of the argument.

\begin{lem}\label{L:unique}
Let $X \in \cM_{g,n}$ be a Riemann surface. If two Jenkins-Strebel differentials $q, q'$ on $X$ have the same core curves of cylinders, and corresponding cylinders have the same circumference, then $q=q'$. 
\end{lem}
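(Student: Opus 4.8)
The plan is to use the classical extremal-length / area argument for Jenkins--Strebel differentials. Suppose $q$ and $q'$ are two Jenkins--Strebel differentials on $X$ with the same horizontal cylinders $C_1,\dots,C_k$ having the same core curves $\gamma_1,\dots,\gamma_k$ (as free homotopy classes) and the same circumferences $c_1,\dots,c_k$. First I would recall that a Jenkins--Strebel differential $q$ with cylinders $C_i$ of circumference $c_i$ and height $h_i$ is the extremal metric for the weighted extremal length problem on the homotopy classes $\gamma_i$: among all conformal metrics $\rho\,|dz|$ on $X$ in which each $\gamma_i$ has $\rho$-length at least $c_i$, the $q$-metric uniquely minimizes the total area, which equals $\sum_i c_i h_i$. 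The modulus of $C_i$ in the $q$-metric is $h_i/c_i$.

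The key step is the length-area comparison. Using $q$ as a competitor metric for the extremal problem solved by $q'$, and vice versa, I would compare areas. Concretely, write $q = \varphi(z)\,dz^2$ and $q' = \psi(z)\,dz^2$ and consider the metric $|\varphi|^{1/2}|dz|$. In each cylinder $C_i'$ of $q'$, foliated by the $q'$-horizontal circles homotopic to $\gamma_i$, integrate the $q$-length of these circles: each has $q$-length at least $c_i$ (since $\gamma_i$ is the core curve and $c_i$ is its circumference, hence its $q$-length is exactly $c_i$, and any curve in that class has $q$-length $\ge c_i$ because in the $q$-metric $\gamma_i$ is itself a geodesic of length $c_i$ — more precisely one uses that the $q$-length of any closed curve freely homotopic to the core of a $q$-cylinder is at least the circumference). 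Applying Cauchy--Schwarz across the height of $C_i'$ gives
\[
\operatorname{Area}_q(C_i') \;\ge\; \frac{c_i^2}{\text{(modulus of }C_i'\text{ in }q'\text{)}} \cdot (\text{mod } C_i' \text{ in } q') \;=\; c_i\, h_i',
\]
wait — more carefully, $\operatorname{Area}_q(C_i') \ge (\text{length}_q \text{ of shortest }q'\text{-horizontal circle})^2 \cdot (\text{mod of } C_i')$ fails to be the clean bound; instead one integrates and gets $\operatorname{Area}_q(C_i')\ge c_i h_i'$ by Fubini together with $\int (\text{length}_q\,\text{of circle})\,dt \ge c_i h_i'$ using length $\ge c_i$. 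Summing over $i$ and using that the $q'$-cylinders fill $X$ up to measure zero yields $\operatorname{Area}_q(X) \ge \sum_i c_i h_i' = \operatorname{Area}_{q'}(X)$. By symmetry the reverse inequality holds, so $\operatorname{Area}_q(X)=\operatorname{Area}_{q'}(X)$ and all the Cauchy--Schwarz inequalities are equalities.

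The equality case is what forces $q=q'$: equality in Cauchy--Schwarz means that along each $q'$-horizontal leaf in $C_i'$ the $q$-length element is constant, and equality of the leaf-length bound forces each such leaf to be a $q$-geodesic of length exactly $c_i$, i.e.\ a $q$-horizontal circle. Hence the horizontal foliations of $q$ and $q'$ coincide leaf-by-leaf inside each cylinder, so $C_i = C_i'$, the heights agree, $h_i=h_i'$, and on each cylinder the differentials agree up to the obvious normalization; tracking the identification of cylinders shows $q=q'$ globally. I expect the main obstacle to be stating the length-area inequality and its equality case with enough care that the ``no new cylinders / leaves match up'' conclusion is genuinely rigorous rather than merely plausible; this is exactly the content of the classical treatment, so I would cite Theorem 20.3 and the remarks after Lemma 20.3 in \cite{Strebel} for the details and reproduce only the comparison above.
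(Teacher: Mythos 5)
Your proposal is correct and takes essentially the same route as the paper: the paper gives no independent proof of Lemma \ref{L:unique}, deferring to Theorem 20.3 and the remarks after Lemma 20.3 in \cite{Strebel}, and the classical length--area comparison with its Cauchy--Schwarz equality analysis that you sketch (and for whose details you cite the same place in \cite{Strebel}) is precisely that argument.
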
 

% To apply this, I might need to argue that all the zero height cylinders also remain of the same circumference when you move in the affine subspace. But that is fine, all movement in the affine subspace is given by cylinder twists and stretches!  

%Note that the cylinders are allowed to have zero height. For the convenience of the reader, we recall a sketch of the proof. 
%%
%%%% http://math.amss.cas.cn/kyry/201411/W020141113385645899512.pdf, Lemma 2.4
%%% Stebel's book
 For the convenience of the reader, we include a proof.
\begin{proof}
Let cylinders on $q$ be $C_i$, and the corresponding cylinders on $q'$ be $C_i'$. Let their circumferences be $c_i$, and their heights be $h_i$ and $h_i'$ respectively. 

Note that $\sqrt{|q|}$ locally gives the flat  metric on $q$, and $|q|$ the flat area form. If we pick coordinates $x+iy$ on $C_i'$ so $|q'|=dxdy$ and $q=f(x,y)dx dy$ and $C_i'$ is identified with $[0,c_i]\times [0,h_i]$ (with an edge identification to glue the rectangle into a cylinder), then we get 
$$c_i \leq \int_0^{c_i} \sqrt{|f(x,y)|} dx$$
for any $y\in [0, h_i']$, with equality if and only if the horizontal circle in $C_i'$ at height $y$ is also a horizontal circle in $C_i$. This is true because  horizontal closed trajectories of $C_i$ are geodesics for the metric $\sqrt{|q|}$. Hence 
$$c_i h_i' \leq \int_{C_i'} \sqrt{|f(x,y)|} dx dy.$$

Now we use Cauchy-Schwarz with functions $\sqrt{|f(x,y)|}$ and $1$ to get 
$$(h_i' c_i)^2 \leq \left(\int_{C_i'} \sqrt{|f(x,y)|} dx dy \right)^2 \leq h_i' c_i \int_{C_i'} |f(x,y)| dx dy=  h_i' c_i \int_{C_i'} |q|.$$

Dividing by $h_i' c_i$ and summing we get  
$$\sum h_i' c_i \leq \sum  \int_{C_i'} |q|.$$
Because $q'$ is Jenkins-Strebel, the $C_i'$ cover the whole surface, so $\sum \int_{C_i'} |q|$ is  the area of $|q|$. Since the area of $|q|$ is $\sum h_i c_i$, we get 
$$\sum h_i' c_i \leq \sum h_i c_i.$$
By symmetry (reversing the roles of $q$ and $q'$), we get equality here. Hence $q$ and $q'$ have the same area, and so they have the same norm for the Teichm\"uller Finsler metric on the cotangent space to $\cM_{g,n}$. The same argument gives that any convex combination of $q$ and $q'$ has the same norm. This forces $q=q'$ since the unit ball of the Teichm\"uller metric is strictly convex. 
\end{proof}

A point of $\Omega {N}$ consists of a translation surface $(Y,\omega)$ and an involution $J$ that negates $\omega$, such that $Y/J\in {N}$. There is a map from $\Omega {N}$ to  $Q {N}$, because $\omega^2$ defines a quadratic differential on $Y/J$. In turn there is a forgetful map from $Q {N}$ to $\cM_{g,n}$ obtained by forgetting the quadratic differential but remembering the location of the poles. We will refer frequently to the composite of these two maps, which gives a map $\Omega {N}$ to $\cM_{g,n}$. For notational simplicity we will omit $J$ from our notation; there is no harm for our arguments in assuming it is the only involution on $Y$ negating $\omega$, as our arguments would not be any different were this not to be the case.

\begin{proof}[Proof of Theorem \ref{T:trans}]
Suppose $N$ has dimension $d$. Since $N$ is totally geodesic, there is a $d-1$ dimensional family of complex geodesics in $N$ passing through each point of $N$, so we get that $QN$ has dimension $2d$. Hence $\Omega N$ also has dimension $2d$. 

Let $r$ be the rank of $\Omega N$. By definition rank is at most half the dimension of $\Omega N$, so $r\leq d$. By Theorem \ref{T:JS} there is a $2d-r$ dimensional family of Jenkins-Strebel differentials in $\Omega N$, and hence also $QN$, with constant circumferences. By Lemma \ref{L:unique}
% and Invariance of Domain, 
we see that the dimension of $N$ is at least $2d-r$. The inequalities $2d-r\leq d$ and $r\leq d$ give $r=d$. By definition of rank, it follows that the projection of the tangent space of $\Omega N$ to absolute cohomology has the same dimension as $\Omega N$. Since leaves of the isoperiodic foliation are tangent to the kernel of this projection, we get that $\Omega N$ is transverse to the isoperiodic foliation. 
\end{proof}

\begin{proof}[Proof of Theorem \ref{T:finite} given Theorem \ref{T:trans}.]
It is proved in \cite{EFW} that each stratum of Abelian differentials contains at most finitely many affine invariant submanifolds of rank at least 2. By Theorem \ref{T:trans}, if $N$ is a totally geodesic submanifold of dimension at least 2, then $\Omega N$ has rank at least 2. 

 Since $\Omega N$ determines $N$, and there are a finite list of strata that may contain $\Omega N$ for $N$ a totally geodesic submanifold of $\cM_{g,n}$, the result follows.  
\end{proof}

%%%%%%%%%%%%%%%%%%%%%%%%%%%%%%%%%%%%%%%%%%
%%%%%%%%%%%%%%%%%%%%%%%%%%%%%%%%%%%%%%%%%%
\section{Proof of Theorem \ref{T:main}}
%%%%%%%%%%%%%%%%%%%%%%%%%%%%%%%%%%%%%%%%%%
%%%%%%%%%%%%%%%%%%%%%%%%%%%%%%%%%%%%%%%%%%

This section requires the results and arguments from the previous section.  

Let $\tilde{N}$ be a totally geodesic submanifold of  $\cT_{g,n}$ of (complex) dimension $d>1$. Let $N$ denote the projection of $\tilde{N}$ to moduli space. Let $Q\overline{N}$ be the closure of $QN$. Note that $QN$  and  $Q\overline{N}$ are $GL(2,\bR)$ invariant. The goal of this section is to show that $Q\overline{N}=QN$, which implies $N$ is closed and hence establishes Theorem \ref{T:main}. In order to find a contradiction, we assume $Q\overline{N}\neq QN$. By \cite{EMM}, each stratum of $Q\overline{N}$ is an affine invariant submanifold. Since $QN$ is properly contained in $Q\overline{N}$, we see that $Q\overline{N}$ must have real dimension strictly greater than $4d$. 

The rough idea of the proof of Theorem \ref{T:main} is to consider all tangent spaces of totally geodesic submanifolds of dimension $d$ through each point of $\overline{N}$. Some version of this gives an equivariant subvariety of a Grassmanian bundle. Using \cite{EFW} we wish to show this subvariety is very large, so roughly speaking there are totally geodesic submanifolds of $\overline{N}$ through every point and  in so many directions that we are able to obtain a contradiction. 
%(This rough outline is not entirely accurate for technical reasons.) 
The first step is to show that there is at least one totally geodesic submanifold through each point of $\overline{N}$. 

\begin{lem}\label{L:limitT}
Suppose that $L_k$ are totally geodesic submanifolds of $\cT_{g,n}$ of constant dimension, and that $X_k\in L_k$ converge to $X$. Let $P_k$ denote the cotangent space to $L_k$ at $X_k$, and suppose that $P_k$ converge to a subspace $P$ of the cotangent space of  $\cT_{g,n}$ at $X$. Then there is a totally geodesic submanifold of $\cT_{g,n}$ that passes through $X$ and whose cotangent space at $X$ is $P$. 
\end{lem}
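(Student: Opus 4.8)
The plan is to realize the desired submanifold, near $X$, as the limit of the $L_k$, exploiting that a totally geodesic submanifold is locally swept out by the complex geodesics through any one of its points. First reformulate the hypothesis: identifying $Q_{X_k}\cT_{g,n}$ with the cotangent space $T^*_{X_k}\cT_{g,n}$ and using that a quadratic differential is determined up to scale by the complex line tangent to the complex geodesic it generates (Teichm\"uller's uniqueness theorem), the convergence $P_k\to P$ is the same as the convergence of the tangent spaces $V_k:=T_{X_k}L_k$ to a complex $d$-dimensional subspace $V\subset T_X\cT_{g,n}$. Two classical facts about $\cT_{g,n}$ will be used. (i) The complex geodesic through a point in a given complex direction is unique and varies continuously with this data; equivalently, if $\exp_Y\colon T_Y\cT_{g,n}\to\cT_{g,n}$ denotes the exponential map of the Teichm\"uller metric, then $(Y,w)\mapsto\exp_Y(w)$ is continuous, each $\exp_Y$ is a homeomorphism onto $\cT_{g,n}$, and for each complex line $\ell$ through the origin, $\exp_Y(\ell)$ is the complex geodesic through $Y$ tangent to $\ell$. (ii) Since $L_k$ is totally geodesic, for every $Y\in L_k$ the complex geodesic through $X_k$ and $Y$ lies in $L_k$ and is tangent at $X_k$ to a complex line in $V_k$; hence $L_k\subseteq\exp_{X_k}(V_k)$, and since $L_k$ is a $d$-dimensional complex submanifold with tangent space $V_k$ at $X_k$, the complex lines of $V_k$ occurring this way are dense in $\bP(V_k)$.

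Fix a small $\e>0$. Closed Teichm\"uller balls are compact, so after passing to a subsequence the sets $L_k\cap\overline{B(X,\e)}$ converge in the Hausdorff topology to a closed set $S\ni X$. Set $\tilde N:=\exp_X(V\cap B(0,\delta))$ for $\delta$ small; I claim $S=\tilde N$ near $X$. For $\tilde N\subseteq S$: given small $w\in V$, choose complex lines $\ell_k\subset V_k$ of the type in (ii) with $\ell_k\to\bC w$ and points $w_k\in\ell_k$ with $w_k\to w$; then $\exp_{X_k}(w_k)\in L_k$, and $\exp_{X_k}(w_k)\to\exp_X(w)$ by (i), so $\exp_X(w)\in S$. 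For $S\subseteq\tilde N$ near $X$: a point of $S$ close to $X$ is a limit of points $Y_k\in L_k$, and writing $Y_k=\exp_{X_k}(w_k)$ with $w_k$ in a complex line of $V_k$ of the type in (ii), the vectors $w_k$ have bounded norm, so after a subsequence $w_k\to w\in V$ and the limit point is $\exp_X(w)\in\tilde N$. Hence $S$ and $\tilde N$ agree near $X$, and in particular the full sequence $L_k\cap\overline{B(X,\e)}$ converges.

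It remains to verify that $\tilde N$ is a totally geodesic complex submanifold of dimension $d$ whose cotangent space at $X$ is $P$. It is totally geodesic: for $Y,Z\in\tilde N$, approximate them by $Y_k,Z_k\in L_k$ as in the previous paragraph; the complex geodesic through $Y_k$ and $Z_k$ lies in $L_k$ and, by (i), converges to the complex geodesic through $Y$ and $Z$, so every point of the latter near $X$ is a limit of points of $L_k$ and therefore lies in $S=\tilde N$. It is a complex submanifold of dimension $d$: by (ii), $L_k\cap B(X,\e)$ is contained in a union of a compact family of holomorphic disks, each of area at most a constant $C(\e)$, so $\operatorname{vol}(L_k\cap B(X,\e))$ is uniformly bounded; Bishop's compactness theorem then provides a subsequence of $L_k\cap B(X,\e)$ converging to an analytic subset, which coincides with $\tilde N$ near $X$ (as $S=\tilde N$ there), and whose tangent cone at $X$ is the union of the complex lines in $V$, namely $V$ itself; since this is a complex linear space of dimension $d$, $\tilde N$ is smooth of dimension $d$ at $X$, and a complex submanifold everywhere as a limit of complex submanifolds. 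Finally $T_X\tilde N=V$, and the quadratic differentials on $X$ generating complex geodesics inside $\tilde N$ are exactly the limits of those generating complex geodesics inside the $L_k$, that is, the limit $P$ of the $P_k$; so the cotangent space of $\tilde N$ at $X$ is $P$. Shrinking $\e$ throughout as necessary, a germ of $\tilde N$ at $X$, which determines the cotangent space, is all that is asked for.

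The step I expect to be the main obstacle is the regularity assertion in the last paragraph: a Hausdorff limit of $d$-dimensional complex submanifolds can a priori lose dimension or acquire singularities, and the Teichm\"uller exponential map, though a homeomorphism, need not be a diffeomorphism, so one cannot simply transport the linear structure of $V$ across it. The inclusion $L_k\subseteq\exp_{X_k}(V_k)$ coming from total geodesy is precisely what rescues this point, since it furnishes both the uniform volume bound needed for Bishop compactness and the identification of the tangent cone of the limit with the linear space $V$.
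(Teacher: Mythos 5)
Your overall strategy (take the limit of the $L_k$ near $X$ via a Teichm\"uller exponential map and pass total geodesy to the limit) is in the spirit of the paper's argument, but several steps have genuine gaps, and they sit exactly where the real work is. First, the density claim in your item (ii) --- that the complex lines tangent at $X_k$ to complex geodesics through $X_k$ inside $L_k$ are dense in $\bP(V_k)$ --- is not a consequence of ``$L_k$ is a $d$-dimensional complex submanifold with tangent space $V_k$'': to see that the initial direction of the geodesic from $X_k$ to a nearby point $\gamma(t)\in L_k$ converges to $\gamma'(0)$ you would need some differentiability of $\exp_{X_k}^{-1}$ at the basepoint, whereas the Teichm\"uller exponential is only known here to be a homeomorphism. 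Similarly, your opening identification of the convergence $P_k\to P$ with convergence of the tangent spaces $V_k$ to a \emph{linear} complex subspace $V$ passes through the nonlinear Finsler duality between quadratic differentials and tangent directions without justification. The paper's proof is built precisely to avoid these issues: it stays on the cotangent side, uses Teichm\"uller's theorem that $q\mapsto \exp_{X_k}(q)$ is a homeomorphism from the unit ball of quadratic differentials at $X_k$ onto $\cT_{g,n}$, and deduces by invariance of domain (plus connectedness of $L_k$) that $L_k$ \emph{equals} the image of the unit ball of $P_k$; properness of this map over compact sets then shows the limit set is exactly the image of the unit ball of $P$ --- no density argument, no tangent spaces, no Hausdorff-limit bookkeeping.

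Second, the regularity step via Bishop's theorem does not work as written: a uniform area bound for the individual disks of a $(2d-2)$-parameter family sweeping $L_k\cap B(X,\e)$ does not bound the $2d$-dimensional volume of $L_k\cap B(X,\e)$, which is what Bishop requires; ``tangent cone equal to a $d$-dimensional linear space'' does not imply smoothness of an analytic set (the cusp $y^2=x^3$ has linear tangent cone at the origin), and identifying the tangent cone with $V$ again presupposes differentiability of $\exp_X$ at $0$; and a limit of complex submanifolds need not be a submanifold. None of this machinery is needed: in the paper the limit is the homeomorphic image of the ball in $P$, hence a topological manifold of real dimension $2d$, it is totally geodesic because complex geodesics between limit points are limits of complex geodesics in the $L_k$, and the fact quoted in the introduction (a real submanifold of $\cT_{g,n}$ containing the Teichm\"uller disk through any two of its points is a complex submanifold) upgrades it. Finally, ``totally geodesic'' here requires containing the \emph{entire} complex geodesic through any two points, and Corollary \ref{C:limit} needs the whole disk generated by $q$ to lie in $L$; so a germ of $\tilde N$ at $X$ is not ``all that is asked for'' --- you need the global object, which the exponential image of the full unit ball of $P$ gives for free.
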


\begin{proof}
Let $L$ be the set of all limit points of sequences $Y_k$ with $Y_k\in L_k$. If $\lim Y_k$ and $\lim Y_k'$ are two such points of $L$, then since the complex geodesic from $Y_k$ to $Y_k'$ lies in $L_k$, we get that the complex geodesic from  $\lim Y_k$ to $\lim Y_k'$ lies in $L$. (This can for example be proven as in the last paragraph of this proof.) 

Let $Q_{1}\cT_{g,n}$ be the bundle of quadratic differentials over $\cT_{g,n}$ of norm less than 1. There is a well known continuous map  $E:Q_{1}\cT_{g,n}\to \cT_{g,n}$ that maps $(Y,q)$ to the unique Riemann surface $Y'$ such that there is a Teichm\"uller mapping $Y\to Y'$ with initial quadratic differential $q$ and stretch factor $(1+\|q\|)/(1-\|q\|)$.  The restriction of $E$ to the quadratic differentials of norm less than 1 on any fixed Riemann surface is a  homeomorphism to $\cT_{g,n}$. See for example  \cite[Chapter 11]{FM} for a review of this material.

Since $L_k$ is totally geodesic, it contains the image of $P_k$ under $E$. By invariance of domain, this image is a real manifold of real dimension equal to the real dimension of $P_k$, so we see that $L_k$ is equal to the image of $P_k$. 

The restriction of $E$ to the preimage in $Q_{1}\cT_{g,n}$ of any compact subset of $\cT_{g,n}$ (under the standard projection $Q_{1}\cT_{g,n}\to \cT_{g,n}$) is a proper map. Hence we get that $L$ is the image of $P$ under $E$. By invariance of domain, $L$ is a real manifold of dimension equal to the real dimension of $P$. Since $L$ is totally geodesic, $L$ must in fact be a complex submanifold.   
\end{proof}

\begin{cor}\label{C:limit}
For every $(X, q)\in Q\overline{N}$ there is at least one $d$ dimensional   totally geodesic submanifold $L$ such that the Teichm\"uller disk generated by $q$ is contained in $L$ and $Q L  \subset Q\overline{N}$. 
\end{cor}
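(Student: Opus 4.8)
The plan is to obtain $L$ as a limit of mapping class group translates of $\tilde N$ and then apply Lemma \ref{L:limitT}. Write $\pi\colon\cT_{g,n}\to\cM_{g,n}$ for the projection, so $N=\pi(\tilde N)$ and $\pi^{-1}(N)$ is the union of the translates $\gamma\tilde N$, $\gamma\in\mathrm{Mod}_{g,n}$. Since $Q\overline N$ is by definition the closure of $QN$, fix a sequence $(X_k,q_k)\in QN$ converging to $(X,q)$.

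First I would attach to each $k$ a $d$-dimensional totally geodesic submanifold of $\cT_{g,n}$. Because $(X_k,q_k)\in QN$, the Teichm\"uller disk generated by $q_k$ lies in $N$; lifting it to $\cT_{g,n}$ gives a Teichm\"uller disk contained in $\pi^{-1}(N)=\bigcup_\gamma\gamma\tilde N$. A connected Teichm\"uller disk inside such a union lies in a single translate --- total geodesy forces it to stay inside any translate it is tangent to --- so this lifted disk lies in some $\gamma_k'\tilde N$. After replacing $\gamma_k'\tilde N$ and the lifted disk by their images under a further element of $\mathrm{Mod}_{g,n}$ and passing to a subsequence, I may assume the lift $\tilde X_k$ of $X_k$ on this disk converges to a fixed lift $\tilde X$ of $X$; set $L_k:=\gamma_k\tilde N$ and write $\tilde q_k$ for the corresponding lift of $q_k$, so that $\tilde q_k\to\tilde q$, the lift of $q$ at $\tilde X$, and $\tilde q_k$ lies in the cotangent space $P_k$ of $L_k$ at $\tilde X_k$. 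Note that $\pi$ carries $QL_k=\gamma_k(Q\tilde N)$ into $QN$.

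Next I would pass to the limit. The $L_k$ are totally geodesic submanifolds of constant dimension $d$ with $\tilde X_k\in L_k$ and $\tilde X_k\to\tilde X$, so after a further subsequence, by compactness of the Grassmannian bundle over a neighbourhood of $\tilde X$, I may assume $P_k\to P$. By Lemma \ref{L:limitT} there is a totally geodesic submanifold $L$ of $\cT_{g,n}$ through $\tilde X$ whose cotangent space at $\tilde X$ is $P$; it has dimension $d$, and since $\tilde q=\lim\tilde q_k\in P$ the Teichm\"uller disk generated by $q$ (through $\tilde X$) is contained in $L$. For the remaining claim $QL\subset Q\overline N$: as in the proof of Lemma \ref{L:limitT}, $L$ is the image of $P$, and each $L_k$ the image of $P_k$, under the map $Q_1\cT_{g,n}\to\cT_{g,n}$; since $P_k\to P$ and this map is proper over compact subsets, the cotangent bundle of $L$ is a limit of those of the $L_k$, so every point of $QL$ is a limit of points of the $QL_k$, which all project into $QN$, and hence $QL$ projects into $\overline{QN}=Q\overline N$.

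I expect the main obstacle to be this final convergence step: verifying that the cotangent bundles of the $L_k$ genuinely converge to that of $L$, so that $QL$ is controlled by the $QL_k$, together with the bookkeeping needed to keep base points, disks and differentials consistently lifted through the sequence of translates. A secondary delicate point, where the hypothesis that $N$ (equivalently $\tilde N$) is totally geodesic is used, is the claim in the first step that the lifted disks do not jump between distinct translates of $\tilde N$, which is what allows each $L_k$ to be taken to be an honest totally geodesic submanifold of dimension $d$.
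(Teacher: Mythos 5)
Your argument is correct and is essentially the paper's: the corollary is stated there without a separate proof, as the direct consequence of Lemma \ref{L:limitT} obtained exactly as you do --- lift a sequence of $QN$ converging to $(X,q)$ to translates of $\tilde{N}$ (which serve as the $L_k$), extract a convergent subsequence of cotangent planes in the Grassmannian, apply the lemma, and project. The two delicate points you flag (the lifted disk lying in a single translate, and convergence of the cotangent bundles of the $L_k$ to that of $L$) are genuine but handled by exactly the mechanisms you indicate: total geodesy promotes tangency to containment, and the description of $L$ and $L_k$ as images of $P$ and $P_k$ under the map $Q_{1}\cT_{g,n}\to\cT_{g,n}$ (together with reapplying Lemma \ref{L:limitT} at other base points of $L$) gives the convergence of cotangent spaces.
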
 

Note that $L$ is not assumed to be closed (a priori it may be dense in  $\overline{N}$), and it is not assumed to be unique.

\begin{lem}\label{L:linear}
Let $L$ be a totally geodesic submanifold of $\cM_{g,n}$. We assume $L$ is complete but not  closed. Then $\Omega L$ is locally a countable union of  subsets that are  real analytic in period coordinates. 
\end{lem}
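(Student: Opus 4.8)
The plan is to reduce the statement to a claim about individual plaques of $\Omega L$ and then to establish that claim using the totally geodesic and completeness hypotheses.

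First I would record the soft reductions. Since $L$ is a complex, hence real-analytic, submanifold of $\cM_{g,n}$, the bundle $QL$ and its largest stratum are real-analytic, and passing to the orienting double cover (where the topological type of the cover is locally constant) is a finite real-analytic map; hence $\Omega L$ is a real-analytic immersed submanifold of a stratum $\cH$ of Abelian differentials. As $\Omega L$ is second countable, its intersection with any period-coordinate chart $\phi\colon\cU\to H^1(Y_0,\Sigma;\bC)$ is a countable union of connected embedded real-analytic pieces $D_1,D_2,\dots$, so it suffices to show that each $\phi(D_i)$ is an open subset of an $\bR$-linear subspace.

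Next I would describe the local structure of a single $D_i$. Because $QL$, and hence $\Omega L$, is $GL(2,\bR)$-invariant, through each point $v$ of $D_i$ the $GL^+(2,\bR)$-orbit lies in $\Omega L$; in period coordinates this orbit is an open subset of the $\bR$-linear subspace $\span_\bR(\Re v,\Im v)\otimes_\bR\bC$, since $GL(2,\bR)$ acts linearly on $H^1(Y_0,\Sigma;\bR)\otimes_\bR\bR^2$. At a generic $v$, where $\Omega L$ is locally embedded and coincides with $D_i$, this orbit plaque is contained in $D_i$, so each $D_i$ is real-analytically foliated by linear orbit plaques. Moreover, by \cite{EMM} the closure of $\Omega L$ in $\cH$ is a finite union of affine invariant submanifolds, each $\bR$-linear in period coordinates; since $D_i$ is connected and real-analytic it lies in one of these, and the orbit plaques foliating it are affine-linear inside that ambient linear space.

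It remains to promote ``foliated by linear orbit plaques'' to ``an open subset of an $\bR$-linear subspace,'' and this is where the hypotheses that $L$ is totally geodesic and complete enter essentially: $GL(2,\bR)$-invariance alone is not enough, as various cones foliated by linear subspaces are $GL(2,\bR)$-invariant and are manifolds away from their vertices without being linear. The plan is to pass to $\cT_{g,n}$: by completeness, together with the exponential-map description used in the proof of Lemma \ref{L:limitT}, a component $\tilde L$ of the preimage of $L$ is the image under the Teichm\"uller exponential map of a relatively closed, hence closed-in-$\cT_{g,n}$, subset of the unit ball of a complex-linear cotangent space, so $\tilde L$ is a closed totally geodesic submanifold of $\cT_{g,n}$ swept out by the complex geodesics through any one of its points. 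I would then combine this rigidity with the uniqueness of Jenkins--Strebel differentials with prescribed cylinder data (Lemma \ref{L:unique}) and the cylinder-deformation results underlying Theorem \ref{T:JS}, arguing as in the proof of Theorem \ref{T:trans} but relative to each $D_i$ and the affine invariant submanifold containing it, to conclude that the tangent space $T_v(\Omega L)\subset H^1(Y_0,\Sigma;\bC)$ does not vary with $v$ along $D_i$. A connected real-analytic submanifold with locally constant tangent space in linear coordinates is an open subset of an affine subspace, and since $D_i$ is $GL(2,\bR)$-invariant it contains the radial (Euler) direction, so that affine subspace passes through the origin; hence $\phi(D_i)$ is an open subset of an $\bR$-linear subspace, as required.

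The main obstacle is precisely the step just described: genuinely converting the totally geodesic hypothesis on $L$ — and not merely the $GL(2,\bR)$-invariance of $QL$ — into local constancy of the tangent space of $\Omega L$ along each plaque, thereby ruling out cone-type behaviour. I do not expect a soft argument for this and anticipate that it requires the cylinder-deformation machinery of \cite{Wcyl} used in Section 2, now applied relative to the ambient affine invariant submanifold.
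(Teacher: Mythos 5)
Your soft reductions (passing to the orienting double cover, writing $\Omega L$ locally as a countable union of connected real-analytic plaques, and observing each plaque is foliated by $GL(2,\bR)$-orbit plaques) are fine, but the decisive step --- that each plaque is an open subset of an $\bR$-linear subspace --- is exactly the step you leave open, and you say so yourself. So as written the proposal does not prove the lemma. The missing idea is the argument attributed to Kontsevich, which the paper invokes as \cite[Proposition 1.2]{M6}: a locally $GL(2,\bR)$-invariant \emph{complex} submanifold of a stratum is linear (and defined over $\bR$) in period coordinates. Your objection that ``$GL(2,\bR)$-invariance alone is not enough'' because of cone-type examples only applies to real submanifolds; the cones foliated by orbit planes that you have in mind are not complex submanifolds. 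Here each local piece is complex: an open subset of $QL$ is a complex manifold (it sits inside the cotangent bundle of the complex manifold $L$, which exists as an honest manifold because $L$ is a complete totally geodesic submanifold), it immerses holomorphically into the bundle of quadratic differentials, and its image in $\Omega L$ is a locally $GL(2,\bR)$-invariant complex submanifold of the stratum. That is precisely the hypothesis of the Kontsevich argument, and it finishes the proof in one line --- no cylinder-deformation machinery, no Jenkins--Strebel uniqueness, and no appeal to \cite{EMM} for the closure is needed.

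Beyond missing this input, the replacement route you sketch is not developed and is doubtful as stated: the proof of Theorem \ref{T:trans} gets $r=d$ from a global dimension count comparing $\dim N$, $\dim \Omega N$ and the rank, using that cylinder circumferences pin down the Jenkins--Strebel representative on a given Riemann surface; it is not clear how to run this ``relative to a single plaque $D_i$'' of a possibly dense, non-closed $L$ so as to conclude that $T_v(\Omega L)$ is locally constant along $D_i$, and you offer no argument for that implication. Since the lemma you are proving is itself an ingredient feeding into the later structure theory, the correct order of ideas is: completeness and total geodesy give that $QL$ is a manifold; holomorphy plus local $GL(2,\bR)$-invariance give linearity via \cite[Proposition 1.2]{M6}; countability of the plaques then gives the statement.
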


\begin{proof}
It is equivalent to show that the intersection of $QL$ with a stratum is locally real analytic. (By definition, $\Omega L$ lies in a single stratum.) A quadratic differential $(X,q)$ is in $QL$ if and only if $g(X,q)$ is in $L$ (after forgetting the quadratic differential) for all $g\in GL(2,\bR)$. Since the $GL(2,\bR)$ action on each stratum is real analytic and $L$ is complex analytic, this gives that $QL$ is locally defined by an (a priori infinite) set of real analytic equations. 
\end{proof}

We remark that  we get a countable union because $L$ may not be closed, so a neighbourhood of a point may contain countably many ``slices" of $L$. Each ``slice" is real analytic. We also remark that one should keep in mind that the map $q \to \overline{q}/|q|$ sending quadratic differentials to Beltrami differentials is not complex analytic. 

Consider the bundle over $\Omega \overline{N}$ whose fiber over a point $(Y, \omega)$ consists of all real subspaces of the real vector space $\Omega \overline{N}(Y,\omega)$ of real dimension $4d$ that contain $\span_\bC(\Re(\omega),\Im(\omega))$. We emphasize that $\Omega \overline{N}(Y,\omega)$ is of course a complex vector space, but we are choosing to view it as a real vector space, and that our subspaces of real dimension $4d$ are not necessarily complex subspaces. We let $\widehat{R}$ be the total space of this bundle, and denote fibers by $\widehat{R}(Y,\omega)$.

Given $V\in \widehat{R}(Y,\omega)$ and $h\in GL(2,\bR)$, there is a natural real linear map $$h_*: \Omega\overline{N}(Y,\omega) \to \Omega\overline{N}h(Y,\omega),$$ which is obtained by using the flat connection to view both domain and range as subspaces of $H^1(Y,\Sigma, \bR)\otimes \bC$ and letting $h\in GL(2,\bR)$ act trivially on the first factor and via its usual linear action on the second factor $\bC\simeq \bR^2$. 

For any real linear subspace $W$ of a complex vector space, we define $W_\bC =  W + i W$ to be the smallest complex vector subspace containing $W$. So in particular, 
$$(h_*V)_\bC= h_*V + i h_*V$$ is the smallest complex vector subspace of $\Omega \overline{N}h(Y,\omega)$ containing $h_*V$. 

We let $D\pi_{h(Y,\omega)}$ denote the derivative of the natural map $$\pi: \Omega \overline{N}\to \cM_{g,n}$$ at $h(Y,\omega)$. 

With these definitions in place, we now consider the subset $\widehat{R}'$ of $\widehat{R}$ consisting of those subspaces $V$ such that 
$$\dim_\bC D\pi_{h(Y,\omega)} (h_*V)_\bC \leq d$$
for all $h\in GL^+(2,\bR)$. Roughly speaking, we have $V\in \widehat{R}'(Y,\omega)$ if, from a certain limited point of view, $V$ might plausibly be associated with a $d$-dimensional  totally geodesic  submanifold of $\cM_{g,n}$ (like the $L$ in Corollary \ref{C:limit}).

We now define $R$ and $R'$ to be the sub-bundles of $\widehat{R}$ and $\widehat{R}'$ respectively where we additionally require the $V$ to be closed under taking real and imaginary parts. (These are the subspaces defined by real linear equations in suitable complex coordinates.)

\begin{lem}
$R'$ is closed in $R$, and $\widehat{R}'$ is closed in $\widehat{R}$. 
\end{lem}

\begin{proof}
This follows directly from the definition, since having  rank at most $d$ is a closed condition.
\end{proof}

\begin{lem}\label{L:inprime}
If $L$ is a totally geodesic complex manifold and $(Y,\omega)$ is a smooth point of $\Omega L$ and $V$ is the tangent space to $\Omega L$ at $(Y,\omega)$, then $(Y,\omega,V) \in \widehat{R}'$.
% Suppose $(Y, \omega, V)\in R$ and the image in $\cM_{g,n}$ of a neighborhood of $\omega$ in $V$ is an open subset of a totally geodesic submanifold of dimension $d$. Then  $(Y, \omega, V)\in R'$.
\end{lem}

\begin{proof}
This is immediate from the definition, because a map to a manifold of dimension $d$ can have rank at most $d$. Here we keep in mind that since $\Omega L$ is a real analytic variety of dimension $4d$, its tangent space at any smooth point is a real subspace of dimension $4d$.
% Changed on Feb 9:
%It follows from Sard's Theorem that if a map between manifolds is surjective, its derivative has maximal rank at some point. 
\end{proof}
%Consider the subset $R'$ of  $R$ consisting of those subspaces in which a neighborhood of $\omega$ projects to open subset of a $d$-dimensional totally geodesic submanifold in $\overline{N}$. By Lemma \ref{L:limit} ever fiber of $R'$ over $\Omega \overline{N}$ is nonempty. 

\begin{lem}\label{L:notempty}
Every fiber of $\widehat{R}'$ is nonempty. 
\end{lem}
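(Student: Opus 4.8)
The plan is to produce, for each $(Y,\omega)\in\Omega\overline{N}$, an element of the fiber of $R'$ over $(Y,\omega)$ directly out of Corollary \ref{C:limit} and Lemma \ref{L:inprime}. Write $(X,q)\in Q\overline{N}$ for the image of $(Y,\omega)$, so that $q=\omega^2$ and, by the definition of $\Omega\overline{N}$, the differential $q$ lies in the largest dimensional stratum of $Q\overline{N}$. Corollary \ref{C:limit} then supplies a $d$-dimensional totally geodesic submanifold $L$ of $\cM_{g,n}$ such that the Teichm\"uller disk generated by $q$ is contained in $L$ and $QL\subset Q\overline{N}$; in particular $(X,q)\in QL$, and $QL$, being the cotangent bundle of $L$, has dimension $2d$.

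The first thing to check is that a small piece of $QL$ around $(X,q)$ lifts into $\Omega\overline{N}$: the largest dimensional stratum of $Q\overline{N}$ is open in $Q\overline{N}$ and contains $(X,q)$, so a sufficiently small neighborhood of $(X,q)$ in $QL$ lies in that stratum, and taking square roots gives a small piece of $\Omega L$ around $(Y,\omega)$ that lies in $\Omega\overline{N}$. Next I would invoke Lemma \ref{L:linear} — or, in case $L$ is closed, the fact that $\Omega L$ is then an affine invariant submanifold by \cite{EMM} — to conclude that in either case this small piece of $\Omega L$ is an open subset of the affine subspace $\omega+V$ of $H^1(Y,\Sigma,\bC)$, where $V$ is its tangent space at $(Y,\omega)$. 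Because $\Omega L$ is invariant under scaling of $\omega$, the subspace $V$ contains $\omega$; and $V$ is a $2d$-dimensional subspace of $\Omega\overline{N}(Y,\omega)$ cut out by real linear equations. Hence $(Y,\omega,V)\in R$.

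Finally, the image in $\cM_{g,n}$ of a neighborhood of $\omega$ in $V$ agrees with the image of this small piece of $\Omega L$, which under the composite $\Omega L\to QL\to\cM_{g,n}$ is an open subset of the totally geodesic submanifold $L$ of dimension $d$. Lemma \ref{L:inprime} then yields $(Y,\omega,V)\in R'$, so the fiber of $R'$ over $(Y,\omega)$ is nonempty.

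The genuine content all lies in Corollary \ref{C:limit}, Lemma \ref{L:linear}, and Lemma \ref{L:inprime}, so the only point needing care is the bookkeeping in the middle step: verifying that $(X,q)$ sits in the open stratum of $Q\overline{N}$ used to define $\Omega\overline{N}$, so that $\Omega L$ locally lands inside $\Omega\overline{N}$, and that $\Omega L$ is locally linear in period coordinates (Lemma \ref{L:linear}, supplemented by the closed case). Everything else is immediate from the definitions of $R$, $R'$, and the map $\Omega\overline{N}\to\cM_{g,n}$.
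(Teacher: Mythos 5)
Your proposal is correct and follows exactly the route the paper takes: the paper's proof of this lemma is simply the one-line citation of Corollary \ref{C:limit}, Lemma \ref{L:linear}, and Lemma \ref{L:inprime}, and your argument is a faithful spelling-out of that combination (including the bookkeeping that $q$ lies in the top stratum and that $\Omega L$ is locally linear, so its tangent space gives the required $2d$-dimensional subspace containing $\omega$).
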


\begin{proof}
This follows from Corollary \ref{C:limit} and Lemmas \ref{L:linear} and  \ref{L:inprime}, keeping in mind that the smooth points of a real analytic variety are dense. 
\end{proof}

Note that the definitions imply that $\widehat{R}'$ and ${R}'$ are $GL(2,\bR)$ invariant.

\begin{lem}\label{L:Zopen}
Let $U$ be a connected neighbourhood of a point $p\in \bC^a$, let $\cM$ be a complex manifold, and let $f: U\to \cM$ be complex analytic. For any $k$ and $d$, let $\cV_{k,d}$ be the set of real  subspaces $V$ of the tangent space to $\bC^a$ at $p$ of real dimension $k$ such that for all $h\in GL(2,\bR)$ in a neighbourhood of the identity, 
$\dim_\bC Df_{hp}(h_*V)_\bC\leq d.$ Then 
\begin{enumerate}
\item $\cV_{k,d}(p)$ is a subvariety of the Grassmanian of subspaces of $\bR^{2a}$ of dimension $k$, and 
\item in coordinates provided by the Pl\"ucker embedding, $\cV_{k,d}(p)$ is defined by a (possibly infinite) set of polynomials that vary real analytically with $p\in U$. 
\end{enumerate}
\end{lem}

% \begin{lem}\label{L:Zopen}
% Let $U$ be a connected neighborhood of a point $p$ in $\bC^a$, let $\cM$ be a complex manifold, and let $f:U\to \cM$ be analytic. For any $k$ and $d$, let $\cV_{k,d}(p)$ be the set of $k$ planes $V$ through $p$ such that $f$ restricted to $V\cap U$ has derivative of rank at most $d$ at every point of $V\cap U$. Then
% \begin{enumerate}
% \item $\cV_{k,d}(p)$ is a subvariety of the variety of $k$ planes through $p$, and 
% \item in coordinates provided by the Pl\"ucker embedding, $\cV_{k,d}(p)$ is defined by a (possibly infinite) set of homogeneous polynomials that vary analytically with $p\in U$. 
% \end{enumerate}
% \end{lem}

\begin{cor}\label{C:var}
The fibers of $\widehat{R}'$ and $R'$ are real varieties. 
\end{cor}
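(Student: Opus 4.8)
The plan is to deduce this from Lemma~\ref{L:Zopen}, working fiber by fiber. Fix a point $(Y,\omega)\in\Omega\overline{N}$. Since $\Omega\overline{N}$ is an affine invariant submanifold it is linear in period coordinates, so period coordinates identify a neighborhood of $(Y,\omega)$ in $\Omega\overline{N}$ with a connected neighborhood $U$ of $p:=[\omega]$ in the complex vector space $\Omega\overline{N}(Y,\omega)\cong\bC^a$, where $a=\dim\Omega\overline{N}$. After passing to a local orbifold chart of $\cM_{g,n}$ (equivalently, a local lift to $\cT_{g,n}$, which changes the rank of no derivative), the map $\Omega\overline{N}\to\cM_{g,n}$ becomes an analytic map $f:U\to\cM$ to a complex manifold. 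Under these identifications a point of the fiber $R(Y,\omega)$ is a complex $2d$-dimensional subspace $V$ of $\Omega\overline{N}(Y,\omega)$ that is defined over $\bR$ and contains $\omega$, and $(Y,\omega,V)$ lies in $R'$ precisely when $f$ restricted to $V\cap U$ has derivative of rank at most $d$ at every point; that is, $R'(Y,\omega)=R(Y,\omega)\cap\cV_{2d,d}(p)$.

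By Lemma~\ref{L:Zopen} the set $\cV_{2d,d}(p)$ is a subvariety of the Grassmannian of $2d$-planes through $p$, so it remains only to check that $R(Y,\omega)$ is itself a subvariety of that Grassmannian, since an intersection of subvarieties is a subvariety. The requirement that $V$ contain $\omega$ is a linear Schubert condition, hence algebraic. The requirement that $V$ be defined over $\bR$, meaning $V=V_\bR\otimes_\bR\bC$ for a real subspace $V_\bR$ of the real points of $\Omega\overline{N}(Y,\omega)$, identifies the set of such $V$ with a real Grassmannian; in the coordinates provided by the Pl\"ucker embedding this is the locus on which the projectivized Pl\"ucker vector is proportional to a real vector, which is cut out by real polynomial equations. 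Intersecting these two loci shows that $R(Y,\omega)$ is a variety, and therefore so is $R'(Y,\omega)$.

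Because of the reality condition the fibers of $R'$ are most naturally real algebraic varieties, i.e.\ subvarieties of a real Grassmannian, and this is all that is needed in what follows. The only points I anticipate needing care are the bookkeeping around this real structure and the fact that $\cM_{g,n}$ is an orbifold rather than a manifold, both of which are routine and are handled by the local chart noted above; the substantive content is supplied entirely by Lemma~\ref{L:Zopen}.
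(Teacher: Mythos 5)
Your argument is exactly the deduction the paper intends: Corollary \ref{C:var} is stated as an immediate consequence of Lemma \ref{L:Zopen}, applied in a period-coordinate chart to the map $\Omega\overline{N}\to\cM_{g,n}$, and your proposal just makes that explicit. The extra bookkeeping you supply (the Schubert condition $\omega\in V$, the real-definedness locus in the Pl\"ucker embedding, and the orbifold chart) is correct and routine, so this is essentially the paper's proof.
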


\begin{proof}[Proof of Lemma \ref{L:Zopen}.]
For the moment we think of $p$ as fixed. Let $S$ be the set of $k$ planes through $p$ equipped with a choice of basis for the tangent space to the $k$ plane at $p$. It is equivalent to show that the set of $(V, v_1, \ldots, v_k)\in S$ for which 
$\dim_\bC Df_{hp}(h_*V)_\bC\geq d+1$
for arbitrarily small $h$ is a Zariski open subset of $S$. 
%It is equivalent because Zariski locally over the variety of $k$ planes through $p$, $S$ is a product. Note the map from $S$ to the variety of $k$ planes through $p$ is a *not* proper morphism, because fibers are not compact. 

We may assume $\cM=\bC^b$. Using the basis $h_* v_1, \ldots, h_*v_k$ for $h_*V$, we may consider $Df_{hp}$ restricted to $h_*V$ as a  (complex) matrix  whose entries are real analytic functions on a neighborhood $V$ of the identity in $GL(2,\bR)$. If $\dim_\bC Df_{hp}(h_*V)_\bC\geq d+1$, then there is is some $d+1$ by $d+1$ minor of this matrix whose determinant $\rho$ is not identically zero.

% SHOULD CHANGE R TO SOME OTHER LETTER! 

Since $\rho$ is nonzero, there is some $\ell$ so that the $\ell$-th multivariate Taylor polynomial $\rho_\ell$ of $\rho$ centered at the identity in $V\subset GL(2,\bR)$ is also nonzero. Each coefficient of $\rho_\ell$ can be viewed as a polynomial function on $S$. (This polynomial depends on all  partial derivatives of order at most $\ell+1$ of $f$ at $p$. Since $f$ and $p$ are fixed, all these numbers may be viewed as constants.)
% More generally, if v_1, ... v_k are vectors, then the iterated directional derivative D_{v_k} \cdots D_{v_1} f at 0 is an easily computable polynomial 
% in the standard iterated partial derivatives of f at 0. 
Let $c:S\to \bC$ be one of the nonzero coefficients of $\rho_\ell$. 

$(V, v_1, \ldots, v_k)$ is contained in the Zariski open set defined by $c\neq 0$. On this set, $\rho_\ell \neq 0$ and hence $\rho\neq 0$, and hence the rank of the derivative of $f$ restricted to an appropriate $k$-plane is at least $d+1$ at some point.  This proves the first statement. 

For the second statement, note that $\rho_\ell$ is an analytic function of $p$. %Hence $\cV_{k,d}(p)$ is defined by a countably infinite set of polynomials that vary continuously with $p$. %Recall that if a variety is defined by homogeneous polynomials of degree $D$, then it is also defined by homogeneous polynomials of any degree $D'>D$. Let $U_D$ be the subset of $U$ where  where the space of homogeneous polynomials of degree $D$ that vanish on $\cV_{k,d}(p)$ has maximal dimension. Each $U_D$ is open and dense. 
% Hence if $U_D$ is the subset of $U$ where $\cV_{k,d}(p)$ is defined by homogeneous polynomials of degree $D$, we have $U_D\subset U_{D+1}\subset \cdots$. 
\end{proof}

\begin{lem}\label{L:new}
Every fiber of $R'$ is non-empty. 
\end{lem}

\begin{proof}
Consider a point $(Y,\omega)$ whose orbit is dense in $\overline{N}$ and a $W\in \widehat{R}'(Y,\eta)$. For clarity, assume $(Y,\omega)$ is Lyapunov generic, although a weaker assumption suffices. Write $$W_\bC = \span_\bC(\Re(\omega), \Im(\omega))\oplus Q,$$ where the two factors are symplectically orthogonal after applying $p$. Let $g_t=\operatorname{diag}(e^t, e^{-t})$ denote Teichm\"uller geodesic flow. Since $\lambda_2<1$, the Hodge norms of the imaginary parts of vectors in $Q$ get contracted under $(g_t)_*$. In contrast, the Hodge norms of the real parts of vectors in $Q$ grow. Thus, it is helpful to write $Q=Q_{\mathrm{im}}\oplus Q_{\mathrm{good}}$, where $Q_{\mathrm{im}}$ is the subspace of vectors with zero real part and $Q_{\mathrm{good}}$   is any real subspace of $Q$ which is a complement. Note that there exists an $\epsilon>0$ such that  every non-zero vector $w$ in $Q_{\mathrm{good}}$ has $\|\Re(w)\| >  \epsilon \|\Im(w)\|.$

For every $(Y', \omega')\in \overline{N}$, we can find a sequence $t_n\to \infty$ such that $g_{t_n}(Y,\omega)\to (Y', \omega')$. We can also assume that the subspaces $(g_{t_n})_* W$ converge to a subspace $Z$ in $\widehat{R}'$, using compactness. Similarly, we can assume that the subspaces $(g_{t_n})_* Q_{\mathrm{good}}$ converge to a real subspace $Z_{\mathrm{good}}$ of $Z$. The Hodge norm comments above imply that every vector in $Z_{\mathrm{good}}$ has zero imaginary part. 

By definition, $Q$ is a complex subspace of complex dimension at least $2d-2$. Hence $Q_{\mathrm{good}}$ has real dimension at least $2d-2$, and of course the limit $Z_{\mathrm{good}}$ has the same dimension as $Q_{\mathrm{good}}$. Let $Z'$ be a subspace of $Z_{\mathrm{good}}$ of real dimension $2d-2$. Let $V=\span_\bC(\Re(\omega'), \Im(\omega'))\oplus (Z'\otimes\bC)$. We see that $V\in R'$. 
\end{proof}

From now on we will use $r$ to denote the rank of $\Omega \overline{N}$, and $b$ will denote $\dim_\bC \ker(p) \cap \Omega \overline{N}(Y, \eta)$ for any $(Y,\eta)\in \Omega \overline{N}$. Thus $2r+b$ is the dimension of $\Omega \overline{N}$.

\begin{lem}\label{L:notall}
For almost every $(Y,\omega)\in \Omega \overline{N}$, the following holds: 

$R'(Y,\omega)$ does not contain any subspaces contained in $$\ker(p)+\span_\bC(\Re(\omega), \Im(\omega)).$$ 

If $2d\geq b+2$, then there is a subspace $V$ in $R(Y,\omega)$ such that 
\begin{enumerate} 
\item $V\notin R'(Y,\omega)$, 
\item $\ker(p) \cap \Omega \overline{N}(Y, \eta) \subset V$, and 
\item $p(V)$ can be either expressed as the sum of  $\span_\bC(\Re(\omega), \Im(\omega))$ and an isotropic subspace (if $2d\leq b+r+1$), or it contains a Lagrangian subspace of $p(\Omega \overline{N}(Y, \eta))$ (if $2d\geq b+r+1$). 
\end{enumerate}
\end{lem} 

We think of the lemma as concerning subspaces in $R(Y,\omega)$ where the symplectic form is ``as degenerate as possible" given the restriction of being in $R(Y,\omega)$. 

\begin{proof}
% First we prove  that  $R'(Y,\omega)\neq R(Y,\omega)$ for almost every $(Y,\omega)$. Suppose in order to find a contradiction that $R'(Y,\omega)= R(Y,\omega)$ for a positive measure set of $(Y,\omega)$. Since the $GL(2,\bR)$ action is ergodic, and since $R'$ is closed, in fact $R'(Y,\omega)= R(Y,\omega)$ for  every $(Y,\omega)$ in  $\Omega \overline{N}$.   
% %Thus, for every point point $(Y, \omega)\in \Omega \overline{N}$, and every subspace of the tangent space of dimension $2d$ that contains $\omega$, the image of this subspace in $\cM_{g,n}$ has dimension  $d$. 
%
%  By the assumption that $Q\overline{N}$ has dimension strictly greater than $2d$, we get that $2r+b>2d$. 
%
By Theorem \ref{T:JS} there exists $(Y_0,\omega_0)\in \Omega \overline{N}$ and an affine subspace $S$ of  $\Omega \overline{N}(Y_0,\omega_0)$ that contains $\omega_0$,  has dimension $r+b$, and has the property that if $S_0$ is a neighbourhood of $\omega_0$ in $S$, so $S_0$ can be viewed as a subset of the stratum, then all the surfaces in $S_0$ are horizontally periodic with corresponding cylinders of the same circumference. By Lemma \ref{L:unique}, the map $S_0$ to $\cM_{g,n}$ is injective. By the Constant Rank Theorem (a corollary of the Inverse Function Theorem), since the  map $S_0 \to \cM_{g,n}$ is injective, the derivative must have rank at least $r+b$ at some point.
Replacing $(Y_0, \omega_0)$ by a nearby point in $S_0$ if necessary, we may assume that the derivative of $S_0 \to \cM_{g,n}$ has rank $r+b$ at $(Y_0, \omega_0)$.  That is, that derivative is injective. 

Recall from the statement of Theorem \ref{T:JS} that $S=\omega_0+L$, where $L$ is the subspace of $\Omega \overline{N}(Y_0,\omega_0)$ which is zero on all core curves of horizontal cylinders. Let $\bC S=L+\bC\omega_0$ be the subspace spanned by $S$. Since $\omega_0 \notin L$, $\bC S$ has dimension $r+b+1$. The subspace $\bC S$ is seen to be closed under real and imaginary parts, because $L$ is and $\Im(\omega_0)\in L$.

Now, consider any $(Y,\omega)\in \Omega \overline{N}$ with dense orbit. To prove the first claim, suppose $R'(Y,\omega)$  contains a subspace $V$ contained in $\ker(p)+\span_\bC(\Re(\omega), \Im(\omega)).$ Using that $R'$ is closed and taking a limit, we  can obtain a subspace $V_0\in R'(Y_0, \omega_0)$ contained in $\ker(p)+\span_\bC(\Re(\omega_0), \Im(\omega_0)).$ 

Noting that $V_0 \subset \bC S$ gives a contradiction, because $V_0 \in R'(Y_0, \omega_0)$ implies the dimension of $D\pi_{(Y_0, \omega_0)}(V_0)$ is at most $d$, but $V_0 \subset \bC S$ implies $D\pi_{(Y_0, \omega_0)}(V_0)$ has dimension at least $\dim(V_0)-1=2d-1$, and we have assumed $d>1$. 

To prove the second claim, suppose that $R'(Y,\omega)$ contains every $V$ in $R(Y,\omega)$ such that 
\begin{enumerate} 
\item $\ker(p) \cap \Omega \overline{N}(Y, \eta) \subset V$, and 
\item $p(V)$ can be either expressed as the sum of  $\span_\bC(\Re(\omega), \Im(\omega))$ and an isotropic subspace (if $2d\leq b+r+1$), or it contains a Lagrangian subspace of $p(\Omega \overline{N}(Y, \eta))$ (if $2d\geq b+r+1$).
\end{enumerate}
 
Again by taking limits, it follows that $R'(Y_0,\omega_0)$ contains every $V_0$ in $R(Y_0,\omega_0)$ such that 
\begin{enumerate} 
\item $\ker(p) \cap \Omega \overline{N}(Y_0, \eta_0) \subset V_0$, and 
\item $p(V_0)$ can be either expressed as the sum of  $\span_\bC(\Re(\omega_0), \Im(\omega_0))$ and an isotropic subspace (if $2d\leq b+r+1$), or it contains a Lagrangian subspace of $p(\Omega \overline{N}(Y_0, \eta_0))$ (if $2d\geq b+r+1$).
\end{enumerate}
If $2d\leq b+r+1$, then  $\bC S$ contains such a subspace $V_0$, because $2d\geq b+2$, and we obtain a contradiction as before. If $2d> b+r+1$, then $\bC S$ is contained in such a subspace $V_0$. We obtain a contradiction by noting that $\dim D\pi_{(Y_0, \omega_0)}(V_0)\geq r+b$ since $\bC S$ is contained in  $V_0$, and $\dim D\pi_{(Y_0, \omega_0)}(V_0)\leq d$ since $V_0\in R'(Y_0, \omega_0)$. This implies $r+b\leq d$ and hence  $2r+b \leq 2d$, which contradicts our assumption that $2r+b>2d$. 
\end{proof}

We now give the result of \cite{EFW} that we will use, phrased in a way to suit our present purpose. It can be viewed as a black box.  Define $G(Y,\omega)$ to be the subgroup of $GL(\Omega \overline{N}(Y,\omega))$ that acts trivially on $\ker(p)\cap \Omega \overline{N}(Y,\omega)$, preserves the tautological plane $\span(\Re(\omega), \Im(\omega))$, and induces a symplectic linear transformation of $p(\Omega \overline{N}(Y,\omega))$.

\begin{rem} There exists a basis for  $\Omega \overline{N}(Y,\omega)$ beginning with  a basis for $\ker(p)\cap\Omega \overline{N}(Y,\omega)$ followed by $\Re(\omega), \Im(\omega)$ with respect to which $G(Y,\omega)$ can be informally specified as 
$$\left(\begin{array}{ccc}
I & 0 & * \\
0 &SL(2,\bR) & 0 \\ 
0 & 0 & Sp(2r-2,\bR)
\end{array}
\right),$$
where $r$ is the rank and $I$ is an identity matrix. 
\end{rem}

\begin{thm}[Eskin-Filip-Wright]
Let $\Omega \overline{N}$ be any affine invariant submanifold, and let $\cT$ be its tangent bundle. Let $V$ be a measurable equivariant vector subbundle of any tensor power construction of $ \cT$ and its dual. Then, for almost every $(Y,\omega)\in  \Omega \overline{N}$, the fiber $V(Y,\omega)$ is invariant under $G(Y,\omega)$.  
\end{thm}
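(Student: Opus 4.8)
Although this statement is imported from \cite{EFW} as a black box, here is how one would approach proving it. Regard the Kontsevich--Zorich cocycle restricted to the bundle $\cT$ over $\Omega\overline N$ as a cocycle $\cG$ over the ergodic action of $SL(2,\bR)$ (ergodicity even for the upper triangular subgroup suffices, and is all one uses). By Zimmer's theory $\cG$ has an algebraic hull: a minimal algebraic subgroup $\mathbf H$, unique up to conjugacy, such that $\cG$ is measurably cohomologous to a cocycle valued in $\mathbf H$, the cohomology being witnessed by a measurable map $\phi$ that trivializes each fiber $\Omega\overline N(Y,\omega)$ to a fixed vector space in a way that pushes $\cG$ into $\mathbf H$. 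Under the induced trivialization of any tensor power construction of $\cT$ and its dual, a measurable equivariant subbundle $V$ becomes a measurable choice of subspace of a fixed tensor space that is invariant under the (now honest) cocycle, hence invariant under the $SL(2,\bR)$ action on the trivialized bundle, hence constant almost everywhere and $\mathbf H$-invariant by ergodicity. Therefore $V(Y,\omega)$ is invariant under $\phi(Y,\omega)^{-1}\mathbf H\,\phi(Y,\omega)$, and the whole theorem reduces to showing that, after conjugation by $\phi$, the algebraic hull $\mathbf H$ equals the group $G(Y,\omega)$ of the Remark.

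For the inclusion $\mathbf H\subseteq G$, note that the projection $p$, the tautological plane $\span(\Re\omega,\Im\omega)$, and the symplectic form on $p(\cT)$ (which restricts nondegenerately to $p(\cT)$ by \cite{AEM}) are all equivariant, so $\cG$ preserves them and $\mathbf H$ lies in the corresponding stabilizer, which is precisely the group of block form in the Remark except that the top-left block need only be in $GL$. It remains to arrange, by a further cohomology change absorbed into $\phi$, that the cocycle acts trivially on $\ker(p)\cap\cT$: this is the statement that the ``rel'' part of the Kontsevich--Zorich cocycle has trivial algebraic hull, which rests on the fact that $\ker(p)\cap\cT$ carries only zero Lyapunov exponents together with a semisimplicity argument forcing its semisimple part to be trivial; this is one of the genuinely new points in \cite{EFW}.

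The substance is the reverse inclusion $\mathbf H\supseteq G$, which splits into the three blocks. The $SL(2,\bR)$ block is automatic: the tautological subbundle carries the standard representation with respect to the action, and no cohomology change can shrink it. For the $Sp(2r-2,\bR)$ block one invokes semisimplicity of the Kontsevich--Zorich cocycle \cite{Fi2} (building on \cite{EMM}) to split the symplectic orthocomplement of the tautological plane in $p(\cT)$ into cocycle-irreducible pieces, each with an invariant symplectic form; Filip's Hodge-theoretic classification of the algebraic hulls of such pieces, combined with the fact that these pieces are tangent to the affine invariant submanifold, excludes all cases other than the full symplectic group of each piece and also excludes the diagonal embeddings arising from isogenous pieces, yielding the full $Sp(2r-2,\bR)$. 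For the unipotent block $*$, which records the coupling of the rel directions $\ker(p)\cap\cT$ to $p(\cT)$, one analyzes its structure as a module over the reductive part $SL(2,\bR)\times Sp(2r-2,\bR)$ — it is a sum of copies of the standard $Sp(2r-2,\bR)$-representation, one for each rel direction — and shows every copy occurs by exhibiting explicit rel and cylinder deformations of suitable $(Y,\omega)\in\Omega\overline N$ along which the coupling is nonzero, so that no proper invariant submodule of the unipotent radical can contain the image of $\cG$.

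The main obstacle is the last two blocks of the lower bound. Proving that the symplectic block is the full $Sp(2r-2,\bR)$ depends on the semisimplicity theorem and the Hodge-theoretic classification of algebraic hulls, which are the deepest inputs available; proving that the unipotent coupling is everything the rel directions permit requires the fine analysis of rel deformations on affine invariant submanifolds. These are exactly the technical core of \cite{EFW}, and they are what make this theorem a genuine black box rather than a formal corollary of the earlier structural results of \cite{EMM, AEM, Fi2}.
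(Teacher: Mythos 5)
The paper does not prove this statement at all: it is imported verbatim from \cite{EFW} and explicitly offered ``as a black box,'' so there is no internal argument to compare yours against. Your outline is faithful to what \cite{EFW} actually does --- recast the statement as a computation of the algebraic hull of the Kontsevich--Zorich cocycle on the tangent bundle of $\Omega\overline{N}$, reduce ``every measurable equivariant subbundle of every tensor construction is invariant under $G(Y,\omega)$'' to the lower bound on the hull via Zimmer's machinery, and then identify the hull blockwise using Filip's semisimplicity/Hodge-theoretic rigidity for the symplectic block and an analysis of the rel--absolute coupling for the unipotent block. But be clear that what you have written is a road map, not a proof: the two lower-bound blocks you describe in the last paragraph are precisely the technical core of \cite{EFW}, and you defer them exactly as the paper does.

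Two smaller imprecisions worth fixing if you ever expand this. First, in the Zimmer reduction, bare ergodicity does not make the trivialized section of the Grassmannian bundle constant; you need that orbit spaces of algebraic actions are countably separated (so the section lands in a single orbit a.e.) together with minimality of the hull (so that orbit is a fixed point), and you should note that the statement of the theorem only requires the inclusion $\mathbf H\supseteq G$ after conjugation by the trivialization --- the upper bound $\mathbf H\subseteq G$ plays no role. Second, the claim that the hull acts trivially on $\ker(p)\cap\cT$ is both unnecessary and slightly too strong as stated: the monodromy can permute zeros, so the cocycle on the rel part may have a nontrivial finite hull; since $G(Y,\omega)$ is \emph{defined} to act trivially there, this finite part is harmless for the invariance statement, but it should not be asserted as part of what \cite{EFW} proves.
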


Note by definition $\Omega \overline{N}(Y,\omega)$ is the fiber of $\cT$ at $(Y,\omega)$, and hence  any linear transformation of $\Omega \overline{N}(Y,\omega)$ induces a linear transformation of any tensor power of this vector space or its dual. 

\begin{cor}\label{C:inv}
Let $R'(Y,\omega)$ be a subvariety of $R(Y,\omega)$ for all $(Y,\omega)\in \Omega\overline{N}$ that is equivariant and  that is defined in  the Pl\"ucker embedding as the set of zeros of a (possibly infinite) set of polynomials that vary analytically. Then at almost every $(Y,\omega)$, the fiber $R'(Y,\omega)$ is invariant under $G(Y,\omega)$. 
\end{cor}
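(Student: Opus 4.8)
The plan is to deduce Corollary \ref{C:inv} from the Eskin--Filip--Wright theorem above; we read its conclusion as the assertion that $R'(Y,\omega)$ is invariant under $G(Y,\omega)$ for almost every $(Y,\omega)$. The idea is to split the homogeneous vanishing ideal of $R'(Y,\omega)$ into its finite-dimensional graded pieces, recognize each piece as the fiber of a measurable equivariant subbundle of a tensor power construction of $\cT$ and its dual, apply the Eskin--Filip--Wright theorem to each piece separately, and then reassemble.

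First I would set up the linear algebra. Write $W(Y,\omega)=\bigwedge^{2d}\Omega\overline{N}(Y,\omega)$, so that the Pl\"ucker embedding realizes $R(Y,\omega)$, and hence $R'(Y,\omega)$, inside $\bP(W(Y,\omega))$, and the space of homogeneous degree-$m$ polynomials on $\bP(W(Y,\omega))$ is $\mathrm{Sym}^m(W(Y,\omega)^*)$. Since $W(Y,\omega)^*\cong\bigwedge^{2d}(\Omega\overline{N}(Y,\omega)^*)$, and since in characteristic zero exterior and symmetric powers split off as natural direct summands of the corresponding tensor powers via the (anti)symmetrization projectors, the bundle $(Y,\omega)\mapsto\mathrm{Sym}^m(W(Y,\omega)^*)$ is an equivariant direct summand of $(\cT^*)^{\otimes 2dm}$, with $GL(2,\bR)$ acting through the Gauss--Manin connection and $G(Y,\omega)$ acting through its action on $\Omega\overline{N}(Y,\omega)$. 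Hence any measurable equivariant vector subbundle of this bundle is of the type to which the Eskin--Filip--Wright theorem applies.

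Next I would apply that theorem degreewise and then reassemble. Let $I_m(Y,\omega)\subseteq\mathrm{Sym}^m(W(Y,\omega)^*)$ be the space of all homogeneous degree-$m$ polynomials vanishing on $R'(Y,\omega)$. Because $R'$ is equivariant and the Pl\"ucker action is by linear automorphisms, the full homogeneous vanishing ideal of $R'(Y,\omega)$ is equivariant, hence so is each $I_m$, and the hypothesis that $R'$ is cut out by polynomials varying analytically makes each $I_m$ a measurable family of subspaces; by $GL(2,\bR)$-ergodicity it has constant rank on a set of full measure, so $I_m$ is a measurable equivariant vector subbundle of $(\cT^*)^{\otimes 2dm}$, and the Eskin--Filip--Wright theorem produces a set $E_m$ of full measure on which $I_m(Y,\omega)$ is $G(Y,\omega)$-invariant. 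On $E=\bigcap_{m\geq 0}E_m$ the entire graded vanishing ideal of $R'(Y,\omega)$ is $G(Y,\omega)$-invariant, so for $g\in G(Y,\omega)$ and $P$ in that ideal, $g^{-1}\cdot P$ again lies in the ideal and therefore vanishes on $R'(Y,\omega)$, which forces $P$ to vanish on $g\cdot R'(Y,\omega)$; since the ideal cuts out $R'(Y,\omega)$ and $G(Y,\omega)$ is a group, this gives $g\cdot R'(Y,\omega)=R'(Y,\omega)$, as desired.

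The step I expect to be the main obstacle is the measurability bookkeeping in the preceding paragraph: one must extract from the hypothesis that the (possibly infinite) defining family ``varies analytically'' the fact that the Hilbert function of the vanishing ideal of $R'(Y,\omega)$ is generically locally constant, so that each $I_m$ is genuinely a measurable vector subbundle of constant rank and the hypotheses of the Eskin--Filip--Wright theorem are met on the nose. By contrast, the tensor-power identification and the passage from the invariance of all the $I_m$ to the invariance of the variety $R'$ itself should be routine.
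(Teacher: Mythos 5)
Your proposal is correct and follows essentially the same route as the paper: decompose the vanishing ideal of $R'(Y,\omega)$ in the Pl\"ucker coordinates into its finite-degree graded pieces, view each piece as a measurable equivariant subbundle of a tensor power construction of $\cT$ and its dual (after noting generic constancy of its rank), apply the Eskin--Filip--Wright theorem degree by degree, and recover invariance of $R'(Y,\omega)$ by intersecting over all degrees. The only cosmetic difference is that the paper phrases the reassembly on the variety side ($R'$ as the intersection of the loci $R'_D$ cut out by the degree-$D$ pieces) rather than on the ideal side, and handles the constant-rank issue by passing to the complement of an invariant analytic subvariety rather than citing ergodicity, which is the same point you flag as the main bookkeeping step.
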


\begin{proof}[Proof of Corollary.]
Recall that the Pl\"ucker embedding of the Grassmanian of $2d$ dimensional subspaces in $\Omega \overline{N}(Y,\omega)$ maps each such subspace to a line in $\bP(\Lambda^{2d}  \Omega \overline{N}(Y,\omega))$. Degree $D$ homogeneous polynomials on this projective space are elements of the $D$-th symmetric power of the dual of $\Lambda^{2d}  \Omega \overline{N}(Y,\omega).$ Both exterior and symmetric powers of a vector space are subspaces of tensor powers of that vector space. %So the degree $D$ polynomials on $R(Y,\omega)$ can be viewed as a subbundle of  a tensor power of the dual of $\cT$. 

Let $R'_D(Y,\omega)$ be the subvariety of $R(Y,\omega)$ defined by those homogeneous polynomials of degree $D$ that vanish on $R'(Y,\omega)$. On the complement of a invariant analytic subvariety, the span of these polynomials has constant dimension. We thus get that the equivariant subbundle defined by these polynomials is invariant under $G(Y,\omega)$, and hence  $R'_D(Y,\omega)$ is invariant under $G(Y,\omega)$. 

$R'(Y,\omega)$ is the intersection of all the $R'_D(Y,\omega)$. The intersection of $G(Y,\omega)$ invariant sets must be $G(Y,\omega)$ invariant. 
\end{proof}

\begin{lem}\label{L:no}
If $2d\leq b+2$, then for almost every $(Y,\omega)$, the fiber $R'(Y,\omega)$ contains a subspace of $\ker(p)+\span_\bC(\Re(\omega), \Im(\omega)).$

If $2d\geq b+2$, then for almost every $(Y,\omega)$, the fiber $R'(Y,\omega)$ contains all $V \in R(Y,\omega)$ such that 
\begin{enumerate} 
\item $\ker(p) \cap \Omega \overline{N}(Y, \eta) \subset V$, and 
\item $p(V)$ can be either expressed as the sum of  $\span_\bC(\Re(\omega), \Im(\omega))$ and an isotropic subspace (if $2d\leq b+r+1$), or it contains a Lagrangian subspace of $p(\Omega \overline{N}(Y, \eta))$ (if $2d\geq b+r+1$). 
\end{enumerate}
\end{lem} 

\begin{proof}
This follows from Corollary \ref{C:inv}, because any nonempty closed subset of $R(Y,\omega)$ invariant under $G(Y,\omega)$ must contain all such  subspaces $S'$. Since $R'$ is closed, if this is true almost everywhere then in fact it is true everywhere. 
%
%To see this, start with any $S'$ in such a closed invariant subspace. Write it as $K+T+A$, where $K=S' \cap \ker(p)$, $T$ is the tautological plane, and $A$ is a complement to $K+T$ such that $p(A)$ is symplectically orthogonal to $p(T)$. 
%
%If $K$ is not as large as possible, then for any vector $v$ not in $K+T$, there  $h\in G(Y,\omega)$ such that $h(v)\notin v+K$ but $p(h(v))=h(v)$. Then the line spanned by $h^n(v)$ converges to a line in  $\ker(p)$ as $n\to \infty$. Hence we may assume $\ker(p)\cap S'$ has maximal dimension. 
%
%If then we have $A=0$, we're done. Otherwise, we must show that any closed subspace of a Grassmanian invariant under the full symplectic group either contains an isotropic subspace or a subspace that contains a Lagrangian. To do this, fix a Lagrangian $L$ and a complementary Lagrangian $L'$. The symplectic group contains all linear maps that act trivially on $L$ and induce a trivial map on $L'$ and that are symmetric. 
%
%So it suffices to show that the symmetric matrices act transitively on lines. For this, it suffices to assume the lines are in $\bR^2$. The matrix 
%%a b 
%%b d
%acting on the line through $(1,0)$ is the line through $(a,b)$, which gives the result. 
% http://webzoom.freewebs.com/cvdegosson/symplectic%20group.pdf page 21
\end{proof}

\begin{proof}[Proof of Theorem \ref{T:main}.]
% As  indicated at the beginning of this section, in order to find a contradiction, we assume $Q\overline{N}\neq QN$ and get that $Q\overline{N}$ has dimension greater than $2d$. By Lemma \ref{L:notempty} and Corollary \ref{C:var}, fibers of $R'$ are non-empty varieties. 
%
Lemma \ref{L:notall} contradicts Lemma \ref{L:no}.
\end{proof}

\bibliography{mybib}{}
\bibliographystyle{amsalpha}
\end{document}